\documentclass[11pt,a4paper]{amsart}

\newcommand{\figdir}{}
\usepackage[margin=28mm]{geometry}
\usepackage{amsmath,amsfonts,amssymb,amsthm}
\usepackage{xcolor,graphicx,enumerate}
\usepackage{clrscode3e}
\usepackage{calc}
\usepackage{url}
\usepackage{hyperref}
\usepackage[normalem]{ulem}
\usepackage[foot]{amsaddr}

\newtheorem{theorem}{Theorem}
\newtheorem{lemma}{Lemma}

\newcommand{\pdpd}[2]{\frac{\partial #1}{\partial #2}}
\renewcommand{\Re}{\operatorname{Re}}
\renewcommand{\Im}{\operatorname{Im}}

\newcommand{\RR}{{\mathbb{R}}}
\newcommand{\CC}{{\mathbb{C}}}

\newcommand{\TT}{{\mathbb{T}}}
\newcommand{\ZZ}{{\mathbb{Z}}}

\newcommand{\FourierF}{\mathcal{F}}

\newtheorem{proposition}{Proposition}
\theoremstyle{definition}
\newtheorem{definition}{Definition}
\theoremstyle{remark}

\newcommand{\brackI}{{({\rm I})}}
\newcommand{\brackII}{{({\rm II})}}

\definecolor{aogreen}{rgb}{0.0, 0.5, 0.0}

\newcommand{\ReviseRed}[1]{#1}
\newcommand{\ReviseBlue}[1]{#1}
\newcommand{\ReviseSetColorBlack}{}
\newcommand{\ReviseFootnote}[1]{}


\DeclareMathOperator{\sgn}{sgn}

\title{A time-domain preconditioner for the Helmholtz equation}
\author[C.C.\ Stolk]{Christiaan C.\ Stolk}
\address{University of Amsterdam, Korteweg-de Vries Institute, POBox
  94248, 1090 GE Amsterdam, The Netherlands}
\email{C.C.Stolk@uva.nl}
\begin{document}

\begin{abstract}
  Time-harmonic solutions to the wave equation can be computed in the
  frequency or in the time domain. In the frequency domain, one solves
  a discretized Helmholtz equation, while in the time domain, the
  periodic solutions to a discretized wave equation are sought, e.g.\
  by simulating for a long time with a
  time-harmonic forcing term.  Disadvantages of the time-domain
  method are that the solutions are affected by temporal
  discretization errors and that the spatial discretization cannot be
  freely chosen, since it is inherited from the time-domain scheme.
  In this work we address these issues. Given an indefinite linear
  system satisfying certain properties, a matrix recurrence relation is
  constructed, such that in the limit the exact discrete
  solution is obtained.  By iterating a large, finite number of times,
  an approximate solution is obtained, similarly as in a time-domain
  method for the Helmholtz equation. To improve the convergence, the
  process is used as a preconditioner for GMRES, and the time-harmonic
  forcing term is multiplied by a smooth window function.  The
  construction is applied to a compact-stencil finite-difference
  discretization of the Helmholtz equation, for which previously no
  time-domain solver was available. Advantages of the resulting solver
  are the relative simplicity, small memory requirement and reasonable
  computation times.
\end{abstract}

\maketitle

\vspace{-2ex}

\section{Introduction}

Time-harmonic solutions to the wave equation can be computed in the
frequency or in the time domain. In the frequency domain, a discrete
version of the Helmholtz equation is solved. In the time domain, the
periodic solutions to a discrete wave equation with time-harmonic
forcing term are sought. Frequency domain methods involve less degrees
of freedom. However, the indefinite linear systems resulting from
discretizing the Helmholtz equation are often difficult to
solve. Time-domain methods can be attractive because they require
relatively little memory and are easy to implement if a time-domain
solver is available. The introduction of
\cite{appelo2019waveholtz} contains a recent overview of time-harmonic
wave equation solvers.

Time-domain methods are based on the correspondence between Helmholtz
and wave equations. In this paper we will use as example the damped
wave equation
\begin{equation} \label{eq:damped_wave_equation}
  \frac{1}{c^2} \pdpd{^2 u}{t^2} + R \pdpd{u}{t} - \Delta u  = f ,
\end{equation}
where $u = u(t,x)$ is the wave field, $f = f(t,x)$ is the forcing
term, $\Delta$ is the Laplacian, $c(x)$ is the spatially dependent
wavespeed, $R$ is a spatially dependent damping coefficient, and
$x$ is in some domain $\Omega$ with Dirichlet and/or Neumann
boundary conditions. Let $u(x,t), f(x,t)$ and $U(x),
F(x)$ be related by
\begin{equation} \label{eq:time_harmonic_u_and_f}
  u(x,t) = e^{i \omega t} U(x) , \qquad \text{ and }
  f(x,t) = e^{i \omega t} F(x) .
\end{equation}
Then $u$ satisfies a wave equation with forcing term $f$
if and only if $U$ satisfies the following Helmholtz equation with
forcing term $F$
\begin{equation} \label{eq:damped_Helmholtz_equation}
  - \Delta U  - \frac{\omega^2}{c^2} U + i \omega R U = F .
\end{equation}
This equation is supplemented with Dirichlet and/or Neumann boundary
conditions that carry over from those for
(\ref{eq:damped_wave_equation}).

We briefly review some time-domain approaches.
The most basic time-domain method is derived from the
limiting-amplitude principle. This principle states that solutions
$u(x,t)$ to (\ref{eq:damped_wave_equation}) with zero initial
conditions and forcing term
\begin{equation} \label{eq:basic_idea_forcing_term}
  f(x,t) = e^{i \omega t} F(x)
\end{equation}
satisfy
\begin{equation} \label{eq:limiting-amplitude_principle}
  u(x,t)  = e^{i \omega t} U(x) + O(1) , \qquad t \to \infty 
\end{equation}
under certain conditions on the problem, see
\cite{EncyclopediaOfMath_LimitingAmplitudePrinciple} and references
therein.  Thus, if $u(x,t)$ is a (numerical) solution to this initial
boundary-value problem, and $T$ is some large time, measured in
periods, then an approximate solution to the Helmholtz equation is
given by
\begin{equation} \label{eq:limiting_amplitude_approx_sol}
  e^{- i 2 \pi T} u(x, 2\pi \omega^{-1} T ) .
\end{equation}
We will call this the limiting-amplitude approximate solution for
time $T$.
A more advanced method is the
exact controllability method \cite{bristeau1998controllability}. In
this method the periodicity of the solutions is enforced using
optimization. The starting value for the optimization procedure is
typically some partially converged limiting-amplitude solution.
Recently more insights in and some improvements to this method were
obtained \cite{grote2019controllability}, and its parallel 
implementation was studied \cite{grote2020parallel}.
In \cite{appelo2019waveholtz} an optimization approach called
WaveHoltz was introduced. This method again uses a form of
optimization but with a different optimization functional.

An important feature of the methods just described is that they approximate
periodic solutions of a given discrete wave equation. This has
two consequences that are in general not desirable. First, the results
will be negatively affected by both spatial and temporal
discretization errors, while solutions to discrete Helmholtz equations
only have spatial discretization errors.  Secondly, the method is
limited to situations in which a time-domain scheme is available, and
not (directly) applicable if one only has a discretization of
(\ref{eq:damped_Helmholtz_equation}) available, or perhaps only a
linear system with similar properties.

In this paper, we will address both of these shortcomings by 
developing a new time-domain solver for discrete
Helmholtz equations.
The method takes as a starting point a linear system 
\begin{equation} \label{eq:discrete_damped_Helmholtz}
  H U = F ,
\end{equation}
where $H$ is a complex $N \times N$ matrix, such that
\begin{equation} \label{eq:Re_H_requirement}
  \text{$\Re H$ is symmetric} ,
\end{equation}
 and
\begin{equation} \label{eq:Im_H_requirement}
  \text{$\Im H$ is symmetric positive semidefinite} .
\end{equation}
In equation (\ref{eq:discrete_damped_Helmholtz})
$F$ is a vector in $\CC^N$ and
$U$ is the unknown, also in $\CC^N$.
\ReviseRed{The applications we have in mind are discretized
  Helmholtz equations and other
time-harmonic wave equations. For a certain finite difference
discretization of the Helmholtz equation the method will be worked out
in detail and numerical results will be given.}
We next describe the steps involved in the construction of  the new
time-domain solver.

First an $N \times N$ system of second order ODE's
\begin{equation} \label{eq:general_semi-discrete_equation}
  \pdpd{u^2}{t^2} + A u + B \pdpd{u}{t}  = f 
\end{equation}
and a frequency parameter $\omega$ are constructed.  We will look for
time-harmonic solutions with frequency $\omega$ of the system
(\ref{eq:general_semi-discrete_equation}). Note that $\omega$ is in
general {\em not} the physical frequency parameter used to derive
(\ref{eq:discrete_damped_Helmholtz}).  It is a computational
parameter, chosen together with the matrices $A$ and $B$, and it depends
on $H$ in a way to be specified.  Time-harmonic functions
$u = e^{i\omega t} U$ and $f = e^{i \omega t} F$, with
$U, F \in \CC^N$, satisfy (\ref{eq:general_semi-discrete_equation}) if
and only if
\begin{equation} \label{eq:time-harmonic_from_semi-discrete}
  ( - \omega^2 I + i \omega B  + A) U = F .
\end{equation}
Therefore we will choose $A,B$ and $\omega$ such that
\begin{equation} \label{eq:choice_of_ABomega_intro}
  H = -\omega^2 I + i \omega B + A .
\end{equation}
The system (\ref{eq:general_semi-discrete_equation}) plays
the role of a semi-discrete wave equation.

Secondly, this system is time-discretized.
This is done in such a way that time-harmonic solutions of the
discrete-time system are exactly those of the continuous-time system.
I.e.\ if $u_n, f_n$ are related to $U,F$ by $u_n = e^{i\omega n \Delta
  t} U$ and
$f_n = e^{i \omega n \Delta t} F$ then $u_n,f_n$ are solutions to the
discrete-time system if and only if $U,F$ satisfy
(\ref{eq:time-harmonic_from_semi-discrete}).
For this purpose we present two modified leapfrog methods.
This is related to ideas from the papers
\cite{holberg1987computational,tam1993dispersion}, see also the
optimized time-stepping method in \cite{berland2006low}.

Having a time-discretization of
(\ref{eq:general_semi-discrete_equation}) at hand, the third step is
to define a map from a right-hand side $F$ in
(\ref{eq:discrete_damped_Helmholtz}) to an approximation for the
solution $U$.  For this we follow the idea of equation
(\ref{eq:limiting_amplitude_approx_sol}) with one modification, which
is the inclusion of a smooth window function in the time-harmonic
forcing term. 


It is known that the convergence of limiting-amplitude approximate
solutions can be slow in some cases, e.g.\ in case of resonant wave
cavities. Therefore we will not use the approximate solution operator
directly, but use it as a preconditioner in an
iterative solution method such as GMRES or BiCGSTAB (Krylov
accelleration). This is the fourth and last step of our
construction. 
The new approximate solution operator will be called a {\em
  time-domain preconditioner}.
The term time-domain preconditioner was used before in
\cite{zschiedrich2006advanced}. In that work a related problem was
solved, but the resulting method was substantially different.
The above distinction between frequency- and time-domain methods
appears no longer satisfactory
for this method: It is based on time-domain methodology, but solves a
frequency-domain discrete wave equation.


Conditions for stability of the new time-discretization and the
convergence for large $T$ of the approximate solutions to the exact
solution of (\ref{eq:discrete_damped_Helmholtz}) will be established
theoretically, including estimates for the convergence if $\Im H = 0$.
Numerical examples confirm the converge of the
approximate solutions and show the usefulness of Krylov accelleration
in cases where a resonant low-velocity zone is present.
Results depend weakly on the large time parameter $T$ used in the
preconditioner, and a parameter for the window function, i.e.\ there
is a large set of suitable parameter choices.

A few situations that do not fit in the classical time-domain setting,
but can be handled with the new solver are as follows. First one can
use it with discretizations that have been designed specifically for
the Helmholtz equation. For example finite-difference discretizations
that minimize dispersion errors such as those from
\cite{BabuskaEtAl1995} for the 2-D case and
\cite{stolk2016dispersion,sutmann2007compact,TurkelEtAl2013} for the
3-D case. Our examples are about this application. One can
also imagine situations where the physical time-domain model is
complicated to simulate, but reduces to a relatively simple Helmholtz
equation in the frequency domain.  A third use case is in the context
of a multigrid method. In multigrid methods the coarsest level system
still has to be solved by another (non-multigrid) method. It is based
on the linear system one started with and on the choice of
multigrid method. In Helmholtz equations the convergence can be very
sensititve to the choice of the coarse level system and it is
recommended to use certain prescribed discretizations
\cite{StolkEtAl2014,stolk2016dispersion}.

However, the usefulness of the method is not restricted to these
cases, and some of the ideas could also be incorporated into other
time-domain methods.

The contents of the remainder of the paper is as follows. In
section~\ref{sec:method}, the construction of the new solver is
described. In section~\ref{sec:examples} examples of this construction
are given in case $H$ results from certain finite-difference
discretizations. Section~\ref{sec:analysis} contains the theoretical
results. After that, section~\ref{sec:numerical_examples} contains the
numerical examples. So far it was assumed that $\Im H$ is diagonal, in
order for the time stepping method to be
explicit. Section~\ref{sec:generalize} describes a variant of the
method that involves an explicit time stepping method even if $\Im H$
is non-diagonal.  We conclude the main text with a discussion
section. Appendices contain some remarks on the compact-stencil
finite-difference discretization described in
section~\ref{sec:examples} and \ReviseRed{the derivation of a Fourier transform used in
section~\ref{sec:analysis}}.

\section{Method}
\label{sec:method}

In this section we describe in detail the construction of a
time-domain preconditioner for a matrix $H$ satisfying
(\ref{eq:Re_H_requirement}) and (\ref{eq:Im_H_requirement}).
We recall from the introduction
that there are three main steps:
(i) the definition of a suitable second order system of ODE's of the
form (\ref{eq:general_semi-discrete_equation}); 
(ii) the definition of a suitable time-integration method for this system of
ODE's;
(iii) the definition of a linear map that produces approximate
solutions based on the limiting-amplitude principle.
Step (ii), the time-integration method, will be discussed first, since
the properties of the time-integration method affect the choice of the
system of ODE's (\ref{eq:general_semi-discrete_equation}).
Then steps (i) and (iii) and the
application of the method as a preconditioner are discussed.

\subsection{Frequency-adapted time discretizations of
  (\ref{eq:general_semi-discrete_equation})}
\label{subsec:time-discretization}

The leapfrog or basic Verlet method is a standard method to integrate
equations of the form (\ref{eq:general_semi-discrete_equation}) in
case that $B = 0$. It is obtained, basically, by discretizing the second order
time derivative using standard second order finite differences.
To allow for nonzero $B$, the damping term has to be discretized as well.
A standard way to do this is with central differences
\cite{brunger1984stochastic, schlick2010molecular,
 sandvik2018numerical}. This yields the equation
\begin{equation} \label{eq:central_differences_damped_leapfrog_verlet}
  \frac{1}{\Delta t^2} \left( u_{n+1} - 2 u_n + u_{n-1}
  \right)
  + \frac{1}{2 \Delta t} B \left( u_{n+1} - u_{n-1} \right) 
  + A u_n = f_n ,
\end{equation}
from which $u_{n+1}$ can be solved.
Note that $u_n$ denotes the discrete approximation to $u(n \Delta t)$,
and that $u(t) \in \CC^N$.

This leads to an explicit method only if $B$ is diagonal. 
\ReviseRed{In section~\ref{sec:generalize} we
will discuss a variant that results in an explicit method in case $B$
is non-diagonal.}

We will formally define the time-integrator resulting from
(\ref{eq:central_differences_damped_leapfrog_verlet}).

\begin{definition} \label{def:Icd}
  Let
  \begin{equation} \label{eq:K_L_central_differences}
  \begin{aligned}
    K = {}& \Delta t^2 A
    \qquad {}& 
    L = {}& \Delta t B
    \qquad {}& 
    g_n = \Delta t^2 f_n .
  \end{aligned}
\end{equation}
  {\em Central differences damped leapfrog} will be defined as the time
  integrator given by
  \begin{equation} \label{eq:cd_integrator}
    u_{n+1} = I_{\rm cd}(u_n,u_{n-1},f_n)
    := \left( I + \tfrac{1}{2}L \right)^{-1}
    \left( (2 - K) u_n - (I - \tfrac{1}{2}L) u_{n-1} + g_n \right) .
  \end{equation}
\end{definition}

The stability of these methods is \ReviseRed{discussed} in
section~\ref{sec:analysis}. According to Theorem~\ref{th:stability_cd},
$I_{\rm cd}$ is stable if
\begin{equation} \label{eq:stability_cond_K_L}
  \text{$K$ and $L$ are positive semidefinite},
\end{equation}
and
\begin{equation} \label{eq:stability_cond_cd_upper}
  \text{$4I - K$ is positive definite} .
\end{equation}
The latter condition leads to a CFL bound, that will be discussed below.

\medskip

As mentioned in the introduction, we look for discretizations such
that the time-harmonic solutions of frequency $\omega$ of the
discrete-time system are exactly those of the continuous time system
(\ref{eq:general_semi-discrete_equation}). Due to discretization
errors this is not the case for $I_{\rm cd}$. In the next
proposition we will show that the time-harmonic solutions to
(\ref{eq:general_semi-discrete_equation}) satisfy a recursion of the
same form as (\ref{eq:central_differences_damped_leapfrog_verlet}), but with
different choices of $A,B$. From these recursions modified schemes can derived
that have the desired property.

\begin{proposition} \label{prop:reproduce_symbol_at_omega}
  Let $u_n$ and $f_n$ be related to $U,F \in \CC^N$ by
  \begin{equation}
    u_n = e^{i \omega n \Delta t} U , \qquad
    f_n = e^{i \omega n \Delta t} F
  \end{equation}
  and let 
  \begin{equation} \label{eq:define_alpha_beta}
    \begin{aligned}
      \alpha = {}& \frac{(\Delta t \, \omega)^2}{2 - 2 \cos (\omega \Delta t)}
      = \frac{(\Delta t \, \omega)^2}{4 \sin( \frac{\omega \Delta
          t}{2})^2}  , 
      & \qquad \text{ and } \qquad
      \beta = {}& \frac{\omega \Delta t}{ \sin( \omega \Delta t)} .
    \end{aligned}
  \end{equation}
  Then $U,F$ satisfy (\ref{eq:time-harmonic_from_semi-discrete}) if
  and only if $u_n$, $f_n$ satisfy
  \begin{equation} \label{eq:mod_central_differences_damped_leapfrog_verlet}
    \frac{1}{\Delta t^2} \left( u_{n+1} - 2 u_n + u_{n-1}
    \right)
    + \frac{1}{2 \Delta t} \tilde{B} \left( u_{n+1} - u_{n-1} \right) 
    + \tilde{A} u_n = \alpha^{-1} f_n ,
  \end{equation}
  where  
  \begin{equation} \label{eq:redefineABf_cd}
    \tilde{A} = \alpha^{-1} A  , \qquad \text{ and } \qquad
    \tilde{B} = \alpha^{-1} \beta B ,
  \end{equation}
\end{proposition}

\begin{proof}
  To prove the first claim, 
  $\tilde{A}$, $\tilde{B}$ and $\tilde{c}$
  will be constructed such that
  \begin{equation} \label{eq:recursion_tilde}
  \frac{1}{\Delta t^2} \left( u_{n+1} - 2 u_n + u_{n-1}
  \right)
  + \frac{1}{2 \Delta t} \tilde{B} \left( u_{n+1} - u_{n-1} \right) 
  + \tilde{A} u_n = \tilde{c} f_n ,
  \end{equation}
  if and only if (\ref{eq:time-harmonic_from_semi-discrete}).
  Inserting $u_n = U e^{i n \omega \Delta t}$ into
  (\ref{eq:recursion_tilde}), results in 
  \begin{equation}
    \left[ \frac{2 \cos(\omega \Delta t) - 2}{\Delta t^2}
      + \frac{i \, \sin( \Delta t \omega)}{\Delta t} B
      + \tilde{A}
    \right] U e^{i n \omega \Delta t} = \tilde{c} e^{i n \omega \Delta t} F .
  \end{equation}
  Using the definitions of $\alpha$ and $\beta$ to rewrite the
  left-hand side, this is equivalent to
  \begin{equation}
    \left[ - \frac{\omega^2}{\alpha} + i \frac{\omega}{\beta} \tilde{B}
      + \tilde{A} \right] U = \tilde{c} F .
  \end{equation}
  Multiplying by $\alpha$ results in 
  \begin{equation}
    \left[ - \omega^2 + i \omega \frac{\alpha}{\beta} \tilde{B}
      + \alpha \tilde{A} \right] U = \alpha \tilde{c} F .
  \end{equation}
  This is equivalent to (\ref{eq:time-harmonic_from_semi-discrete})
  if $\tilde{c} = \alpha^{-1}$ and $\tilde{A}$ and $\tilde{B}$ are defined
  as in (\ref{eq:redefineABf_cd}).
\end{proof}

Based on the proposition we define the following time integrators.
The time-harmonic solutions with frequency $\omega$ of these
integrators correspond exactly to time-harmonic solutions of
(\ref{eq:general_semi-discrete_equation}).

\begin{definition} \label{def:Iacd}
  {\em Frequency adapted central differences damped leapfrog}  
  will be defined as the time integrator given by
  \begin{equation} \label{eq:acd_integrator}
    u_{n+1} = I_{\rm acd}(u_n,u_{n-1},f_n)
    := \left( I + \tfrac{1}{2}L \right)^{-1}
    \left( (2 - K) u_n - (I - \tfrac{1}{2}L) u_{n-1} + g_n \right) ,
  \end{equation}
  where $K$, $L$ and $g_n$ are given by
  \begin{equation} \label{eq:K_L_frequency_adapted_central_differences}
    \begin{aligned}
      K = {}& \frac{\Delta t^2}{\alpha} A ,
      & \qquad
      L = {}& \frac{\beta \Delta t}{\alpha} B ,
      & \qquad
      g_n = {}& \frac{\Delta t^2}{\alpha} f_n .
    \end{aligned}
  \end{equation}
\end{definition}

The time integrators $I_{\rm cd}$ and $I_{\rm acd}$ are of the same
form with different choices for $K$ and $L$. Therefore for
$I_{\rm acd}$ the stability conditions are again
(\ref{eq:stability_cond_K_L}) and (\ref{eq:stability_cond_cd_upper}),
but now with $K,L$ as in
(\ref{eq:K_L_frequency_adapted_central_differences}).

\subsection{Choice of the semi-discrete system and the parameters
  $\omega$ and $\Delta t$}
\label{subsec:choose_semidiscrete}

We now look for a second order system of ODE's of the form
(\ref{eq:general_semi-discrete_equation}), and a parameter $\omega$
such that the time-harmonic solutions to
(\ref{eq:general_semi-discrete_equation}) satisfy $H U = F$. Recall
that $\omega$ is the computational frequency parameter, that is chosen
in the construction of the algorithm, and not the physical frequency
used in the underlying Helmholtz equation. We assume the
time-integrator $I_{\rm acd}$ is used and will discuss the
choice of the parameter $\Delta t$ as well. An explicit and stable scheme is obtained
when 
\begin{equation} \label{eq:ImH_diagonal}
  \text{$\Im H$ is diagonal} .
\end{equation}
Section~\ref{sec:generalize} treats the case that $\Im H$ is non-diagonal. 

Because of (\ref{eq:choice_of_ABomega_intro}), we set
\begin{align} \label{eq:A_B_from_H}
  A = {}& \Re H + \omega^2 I , \qquad \text{and} \qquad
  B = \omega^{-1} \Im H .
\end{align}
where $\omega$ is still to be determined.

We assume the integrator $I_{\rm acd}$ is used, and discuss under
which conditions this leads to stable and explicit scheme. Clearly, the
resulting discrete time system is explicit if $\Im H$ is diagonal, and
generally not otherwise, cf.\ (\ref{eq:A_B_from_H}) and definition~\ref{def:Iacd}.
The stability conditions are given in
(\ref{eq:stability_cond_K_L}) and (\ref{eq:stability_cond_cd_upper}).
Because $\Im H$ is positive semidefinite, $L$ is also positive
semidefinite. To ensure that $K$ is positive semidefinite, we set
\begin{equation} \label{eq:define_omega_ImH_diagonal}
  \omega = \sqrt{ - \lambda_{\rm min}(\Re H)} .
\end{equation}
(or to a lower bound for $\sqrt{ - \lambda_{\rm min}(\Re H)}$ if this
value is not exactly known).
The other stability condition (\ref{eq:stability_cond_cd_upper}) is a form of the
well-known CFL condition. It implies that the eigenvalues of
$K$ must be less than $4$. For central differences
damped leapfrog integration (not frequency adapted) it implies the condition
\begin{equation}
  \Delta t < \frac{2}{\sqrt{\lambda_{\rm max}(A)}} .
\end{equation}
For the frequency adapted variant it implies, by
(\ref{eq:K_L_frequency_adapted_central_differences}) and
(\ref{eq:define_alpha_beta}) the condition 
$\sin( \frac{\omega \Delta t}{2} )^2 
  < \frac{\omega^2}{\lambda_{\rm max}(A)}$,
hence
\begin{equation} \label{eq:define_Deltat_ImH_diagonal}
  \Delta t
  <
  \frac{2}{\omega} \arcsin( \frac{\omega}{\sqrt{ \lambda_{\rm max}(A) } } ) .
\end{equation}

The choices and requirements (\ref{eq:A_B_from_H}),
(\ref{eq:define_omega_ImH_diagonal}) and
(\ref{eq:define_Deltat_ImH_diagonal}) define a suitable choice of
parameters of the systems of ODEs and the $I_{\rm acd}$ integrator in
case $\Im H$ is diagonal.

\subsection{Time-domain approximate solution operators}
\label{subsec:time-domain_approx_sol}

In this section the time-domain approximate solution operator will be
defined. We also show that a complex approximate solution can be computed
by solving a real time-domain wave equation. This appears to be a standard trick
in the field.
The main novelty is that the formula for the time-harmonic forcing
term (\ref{eq:basic_idea_forcing_term}) is modified so that the
forcing is turned on gradually.

\ReviseRed{%
We first specify the window functions that will be used in the forcing term.
\begin{definition} \label{def:admissible_window_function}
  A function $\chi : \RR \to \RR$ is called a $C^k$ admissible window function if
  the following requirements are satisfied.
  \begin{enumerate}[(i)]
  \item
    $\chi$ is even
  \item
    $\chi$ is $C^k$
  \item
    $\chi(0) = 1$, $\chi(x) = 0$ if $x \ge 1$ and $\chi$ is
    non-increasing on $[0,1]$.
  \end{enumerate}
\end{definition}

The time-domain approximate solution operator associated with the
integrator $I_{\rm acd}$ will be denoted by
$S_T^{\rm acd}$ and is defined as follows.

\begin{definition} \label{def:approx_solver}
Let $A, B$, $\omega$ and $\Delta t$ satisfy the requirements of
subsection~\ref{subsec:choose_semidiscrete}. Let $\chi$ be an
admissible $C^\infty$ window function
\ReviseSetColorBlack
and let $T$ be a positive real
constant, such that $n_{\rm steps} := 2\pi \omega^{-1} T / \Delta t$
is an integer.  For $F \in \CC^N$, let
\begin{equation} \label{eq:define_gradual_forcing}
  f_n = f(n \Delta t) , \qquad
  f(t) =  \ReviseRed{ \chi(1 - \frac{t}{2\pi \omega^{-1} T} ) }  e^{i \omega t} F . 
\end{equation}
The {\em time-domain approximate solution operator for $H$} associated with the
integrator \ReviseRed{$I_{\rm acd}$}
is the linear map $\ReviseRed{S_T^{\rm acd}} : \CC^N \to \CC^N$ defined by
\begin{equation}
  \ReviseRed{S_T^{\rm acd} } F = e^{- i 2\pi T} u_{n_{\rm steps} } , 
\end{equation}
where $u_n$, $n=0,1, \ldots, n_{\rm steps} $ is given by
\begin{equation}
  u_{n+1} = \ReviseRed{I_{\rm acd} } (u_n,u_{n-1}, f_n) , \qquad u_0 = 0 .
\end{equation}
\end{definition}}

In section~\ref{sec:analysis} the convergence of $S_T^{\rm acd} F$
to $H^{-1} F$ will be established under the
assumption that requirements for stability of the the time integrators
discussed in subsection~\ref{subsec:choose_semidiscrete} are
satisfied. Section~\ref{sec:numerical_examples} contains numerical
examples for $S_T^{\rm acd}$.

There is still a lot of freedom to choose the window function $\chi$.
In the numerical examples we will introduce therefore an additional
parameter $\rho$, $0 < \rho \le 1$, such that the window function
is 1 on \ReviseRed{$[0, (1-\rho)]$ and positive but strictly less than one on
$(1-\rho,1)$. On the interval $(1-\rho,1)$ a sine square is
used, which leads to the following form of $\chi$ 
\begin{equation}
  \chi(s) =
  \left\{
    \begin{array}{ll}
      1 & \text{if $s \le \rho$}
      \\
      \sin( \frac{\pi}{2 \rho}(1-s) )^2 & \text{if $1-\rho<s<1$}
      \\
      0 & \text{if $s \ge 1$}.
    \end{array}
  \right.
\end{equation}}%
The parameter $\rho$ will be called the window parameter.
\ReviseRed{%
Based on the convergence analysis in subsections~\ref{subsec:convergence} and
\ref{subsec:speed_of_convergence}, $\rho$ should not only be strictly larger than 0
but also strictly smaller than 1.}

We next show that it is sufficient to solve a real time-domain wave
problem to compute the limiting-amplitude approximate solution. The
argument is given in the continuous case, but is applicable
equally well in the discrete case. 

Let $F$ be a complex right-hand side for the Helmholtz equation and
$u(t,x)$ be the solution to (\ref{eq:damped_wave_equation})
with right-hand side $f(t,x) = F(x) e^{i \omega t}$.  Assuming the
limiting-amplitude principle holds, cf.\
(\ref{eq:limiting-amplitude_principle}), 
an approximate solution to the Helmholtz equation is given by
\begin{equation}
  U(x) \approx e^{-i\omega t} u(t,x) , \qquad \text{for some large
    $t$} .
\end{equation}
The field $\Re u(t,x)$ can be determined by solving the real wave
equation with real forcing term, i.e.\ with forcing term
\begin{equation}
  \Re F(x) e^{i \omega t}
  = \cos(\omega t) \Re F(x) -\sin(\omega t) \Im F(x) .
\end{equation}
From (\ref{eq:limiting-amplitude_principle}) it follows that
\begin{equation} 
  \Re u(t,x) = \Re U(x) \cos(\omega t) - \Im U(x) \sin(\omega t) +
  o(1) , \qquad t \to \infty .
\end{equation}
Approximations to $\Re U(x)$ and $\Im U(x)$ can hence be obtained from $\Re
u(t,x)$ by
\begin{equation} \label{eq:real_evaluation}
  \begin{aligned}
    \Re U(x) \approx {}& \Re u(t,x) ,
    \qquad t = T \frac{2 \pi}{\omega}
    \\
    \Im U(x) \approx {}& \Re u(t,x) ,
    \qquad t = \left( T - \frac{1}{4} \right) \frac{2 \pi}{\omega}
  \end{aligned}
\end{equation}
with $T$ a large integer. Therefore real time-domain simulation is
sufficient.

\ReviseBlue{%
Table~\ref{tab:algorithmSacd} contains an algorithm for computing
$S_T^{\rm acd} U$ using time stepping with real fields. The number of time steps
per period is denoted $n_{\rm P}$. It is assumed $n_{\rm P}$ is a
multiple of 4 to facilitate the computation of $\Re U$ and $\Im U$ as in
(\ref{eq:real_evaluation}). If $n_{\rm P}$ is not a multiple of 4 one
could alternatively take $u_j$ for different choices of $j$, with
relative phase not equal to 0 or 180 degrees, and extract $\Re U(x)$ and $\Im
U(x)$ by solving a $2 \times 2$ linear system.}
\begin{table}
\ReviseBlue{%
\begin{codebox}
\Procname{\proc{TimeDomainPreconditioner}$(F, K, L, T, n_{\rm P})$}
\li $u_{-1} = u_{0} = 0$
\li \For $j = 0, \ldots, T n_{\rm P} -1$
\li     \Do
$u_{j+1} = I_{\rm acd}(u_{j}, u_{j-1},
\chi( 1 - \tfrac{j}{T n_{\rm P}} ) \left[
  \cos( \frac{2 \pi j}{n_{\rm P}}) \Re F
  - \sin( \frac{2 \pi j}{n_{\rm P}}) \Im F
\right] )$
        \End
\li $U = u_{T n_{\rm P}} + i u_{ (T -\frac{1}{4}) n_{\rm P}}$
\li \Return U
\end{codebox}
\caption{Algorithm for a time-domain preconditioner. Computed is
  $U = S_T^{\rm acd} F$, with $U, F \in \CC^N$.
  The number of time steps per period is denoted by $n_{\rm P}$.}
\label{tab:algorithmSacd}}
\end{table}

\subsection{Time-domain preconditioned GMRES}
\label{subsec:Krylov_accelleration}
The approximate Helmholtz solver can be used as a preconditioner for
iterative methods like GMRES of BiCGSTAB.
Without preconditioning, the system to be solved is  $H U = F$.
Applying left-preconditioning means that instead the system
\begin{equation}
  P H U = P F 
\end{equation}
is solved, where $P = S_T^{\rm acd}$. The right-preconditioned system is 
\begin{equation}
  H P V = F .
\end{equation}
The vector $V$ is solved from this system and the solution to the
original problem is then given by $U = P V$.  We propose a solution method
where GMRES is applied to the left-preconditioned system.

\section{Examples}
\label{sec:examples}

The examples we consider are finite-difference discretizations
of the damped Helmholtz equation
\begin{equation} \label{eq:damped_Helmholtz_example}
  - \Delta U - k^2 \left( 1 - i \frac{ \tilde{R} }{\pi} \right) U = F
  .
\end{equation}
Here $\tilde{R}$ is the spatially dependent damping in units of
damping per cycle. We start with standard second order differences and
then apply the method to the discretization from
\cite{stolk2016dispersion}.

The method is not limited to finite-difference
discretizations. Finite-element discretizations can also lead to
linear systems $H U = F$ with $H$ satisfying
(\ref{eq:Re_H_requirement}) and (\ref{eq:Im_H_requirement}).

We will see that in our method, the discrete system is a
discretization of a modified PDE, see (\ref{eq:modified_PDE}) below,
and that the CFL bound for this modified PDE is independent of the
velocity. 

\subsection{Second order finite differences}
It is instructive to start with a simple second order
finite-difference discretization. For readability we describe the
two-dimensional case. The degrees of freedom will be denoted by
$U^{(i,j)}$ (in two dimensions), where $i,j$ are in some rectangular
domain $D \subset \ZZ^2$. The matrix $H$ is defined by the equation
\begin{equation}
  ( H U )^{(i,j)} =
  h^{-2}  \bigg( 4 U^{(i,j)} - U^{(i-1,j)} - U^{(i+1,j)} - U^{(i,j-1)} -
  U^{(i,j+1)} \bigg) - (k^{(i,j)})^2  \left( 1 - i \frac{ \tilde{R}^{(i,j)} }{\pi} \right) U^{(i,j)}
\end{equation}
where $h$ denotes the grid spacing, and Dirichlet boundary conditions
are assumed, i.e.\ $U^{(i,j)} = 0$ if $(i,j) \notin D$.

In this case $\Im H$ is diagonal, and $\omega$ and $\Delta t$ are
chosen based on the values 
\begin{equation}
  \begin{aligned}
  \lambda_{\rm min}(\Re H) = {}& - k_{\rm max}^2
\\
\lambda_{\rm max}(\Re H) = {}& - k_{\rm min}^2 + 8 h^{-2} .
\end{aligned}
\end{equation}
where
\begin{equation}
  k_{\rm max} := \max_{(i,j) \in D} k^{(i,j)} , \qquad \text{ and } \qquad
  k_{\rm min} := \min_{(i,j) \in D} k^{(i,j)} .
\end{equation}
We find that $\omega = k_{\rm max}$
while $\Delta t$ is chosen from (\ref{eq:define_Deltat_ImH_diagonal}) using that
$\lambda_{\rm max}(A) = 8 h^{-2} + k_{\rm max}^2 - k_{\rm min}^2$.
The scheme in the computational time domain, can be written as
\begin{equation} \label{eq:discrete_scheme_2ndOrderFd}
  \begin{aligned}
  {}& \frac{\alpha}{\Delta t^2} (u_{n+1}^{(i,j)} - 2 u_n^{(i,j)} + u_{n-1}^{(i,j)} )
  + \frac{\beta}{2 \Delta t} \frac{(k^{(i,j)})^2 \tilde{R}^{(i,j)}}{\pi k_{\rm max}}
  ( u_{n+1}^{(i,j)} - u_{n-1}^{(i,j)} ) 
  \\
  {}& + h^{-2}  \bigg( 4 u_n^{(i,j)} - u_n^{(i-1,j)} - u_n^{(i+1,j)} - u_n^{(i,j-1)} -
  u_n^{(i,j+1)} \bigg) + ( k_{\rm max}^2 -(k^{(i,j)})^2 )  u_n^{(i,j)} =
  f_n^{(i,j)} .
\end{aligned}
\end{equation}
This follows from
(\ref{eq:mod_central_differences_damped_leapfrog_verlet}), by
multiplying this equation with $\alpha$, and using that $A  = \Re H +
\omega^2$, $B = \omega^{-1} \Im H$ and that in this case $\omega = k_{\rm max}$.

The scheme (\ref{eq:discrete_scheme_2ndOrderFd}) differs substantially
from the standard second order FDTD scheme. In fact, it is a
discretization of the PDE
\begin{equation} \label{eq:modified_PDE}
  \pdpd{^2 u}{t^2}  - \Delta u
  + \frac{k^2 \tilde{R}}{\pi k_{\rm max}} \pdpd{u}{t}  
  + \left( k_{\rm max}^2- k^2 \right) u = f .
\end{equation}
The highest order part of this PDE contains a wave
operator $\pdpd{^2}{t^2}  - \Delta$ with constant velocity, even if
the velocity  in the Helmholtz equation we started with is
non-constant (this velocity is proportional to $1/k(x)$).
Thus we have identified another difference with alternative
time-domain methods like those of
\cite{bristeau1998controllability, grote2019controllability,
  appelo2019waveholtz}, that use a discretization of the standard wave
equation. 

On the other hand, this difference can also be largely
removed. If before applying our method
the operator $H$ would be rescaled, multiplying from the
left and the right by a diagonal matrix with entries $c^{(i,j)}$ on
the diagonal, then the resulting computational time domain scheme
would be close to a standard FDTD scheme.

\subsection{An optimized finite-difference method}
\label{subsec:optimized-fd}
Next we consider a discretization with a 27 point cubic
stencil (in 3-D) that was described in
\cite{stolk2016dispersion}. 
There exists different variants of such compact stencil methods, see
among others \cite{BabuskaEtAl1995,sutmann2007compact,TurkelEtAl2013}.
For each gridpoint, the number of neighbors that the gridpoint
interacts with is relatively small (when compared e.g.\ to
higher order finite elements). Even so, the dispersion errors of these
schemes are also relatively small, as shown in \cite{stolk2016dispersion},
so that these methods can be used with relatively coarse meshes%
\footnote{In some applications relatively coarse meshes are used to
  save on computation time. Reduction of dispersion errors is
  important in this case. In general, other discretization
  errors can also be present, e.g.\ related to the presence
  of variable coefficients.}.

The equation discretized in \cite{stolk2016dispersion} was the real
part of (\ref{eq:damped_Helmholtz_example}).
We will discuss here the situation with constant $k$, for variable $k$
we refer to appendix~\ref{app:opt_fd}.
The stencil is the 27 point cube that we will denote with
$\{ -1,0,1\}^3$. 
For constant $k$, due to symmetry there are four different
matrix coefficients. The values of these coefficients are $h^{-2} f_s(
\frac{h k}{2\pi} )$, for $s=0,1,2,3$ respectively, where the dimensionless
function $f_s$ describes the coefficient as a function of the
dimensionless quantity $\frac{h k}{2\pi} $. To be precise,
\begin{equation}
(\Re H)^{(i,j,l; p,q,r)} =
\left\{
  \begin{array}{ll}
    \frac{1}{h^2} f_{|i-p|+|j-q|+|l-r|}( \frac{h k}{2 \pi} )
    & \text{if $(i-p,j-q,l-r) \in \{-1,0,1\}^3$}
    \\
    0 & \text{otherwise}
  \end{array}
\right.
\end{equation}
In \cite{stolk2016dispersion} the $f_s$ are precomputed functions, in
a class of piecewise polynomial functions, obtained by optimization to
minimize phase errors. 
For the 2-D case, in \cite{BabuskaEtAl1995} explicit
expressions for the optimal functions $f_s$, $s=0,1,2$ were derived.
The imaginary part was discretized as above, i.e.\ it was diagonal
with entries
$\frac{(k^{(i,j,l)})^2 \tilde{R}^{(i,j,l)}}{\pi}$. 

Because $\Im H$ is diagonal, the integrator $I_{\rm acd}$ is used.
The discretization in \cite{stolk2016dispersion} has in addition the
property that it was second order and such that
\begin{equation}
  \text{$\Re H + k^2 I$ is symmetric positive definite} .
\end{equation}
Thus $\omega = k$ in case of constant $k$.
The upperbound for $\Re H$ that determines the maximum for $\Delta t$
is given by
\begin{equation} \label{eq:upperbound_HoptimFd}
  \lambda_{\rm max}(\Re H) = f_0( \frac{k h}{2\pi} )
  - 6 f_1( \frac{k h}{2\pi} )
  + 12 f_2( \frac{k h}{2\pi} )
  - 8 f_3( \frac{k h}{2\pi} ) .
\end{equation}
This bound is typically somewhat less than the bound associated with
second order finite differences with the same parameters.
It is straightforward to follow the recipe of
subsection~\ref{subsec:choose_semidiscrete}. This scheme 
can again be considered a discretization of (\ref{eq:modified_PDE}) rather
than of (\ref{eq:damped_wave_equation}).

\section{Analysis}
\label{sec:analysis}

Here theoretical results will be obtained concerning the stability of
the time integrators defined in
subsection~\ref{subsec:time-discretization}, the solutions of these
time-integration schemes and and the convergence of
the approximate solution operators defined in
subsection~\ref{subsec:time-domain_approx_sol}.

The consequences of the stability analysis for the choice of
$\Delta t$ were already discussed in subsection~\ref{subsec:choose_semidiscrete}.

\subsection{Stability}
\label{subsec:stability}
To study the conditions under which (\ref{eq:cd_integrator}) 
and (\ref{eq:acd_integrator})
are stable, we study the growth of the solutions to the homogeneous recursion
\begin{equation} \label{eq:homogeneous_recursion_cd}
  ( 1 + \tfrac{1}{2} L ) u_{n+1} + (- 2+K) u_n + (I - \tfrac{1}{2}  L) u_{n-1} = 0 .
\end{equation}
Throughout we will assume that $K$ and $L$ are symmetric matrices and
that $L$ is positive semidefinite.  The basic idea is to show that the
following energy functions stays bounded
\begin{equation} \label{eq:define_Ecd}
  E_{\rm cd}(n-1/2) = \langle u_n - u_{n-1} , (4I - K)  (u_n - u_{n-1}  ) \rangle
  + \langle u_n + u_{n-1} , K (u_n + u_{n-1}  ) \rangle .
\end{equation}
Here $\langle \cdot , \cdot \rangle$ denotes the standard inner
product. If the energy function is coercive, than the solution also
stays bounded. The following theorem, states the stability conditions
that follow from such an analysis. The conditions under (\ref{it:item_ii_stability_cd})
in the theorem were already mentioned above, see equations
(\ref{eq:stability_cond_K_L}), (\ref{eq:stability_cond_cd_upper}).

This result is not new, see \cite{grote2012explicit}, page 8, for an
earlier discussion of the same energy function.

\begin{theorem} \label{th:stability_cd}
  Let $u_n$, $n \in \ZZ$ be a solution to
  the homogeneous recursion (\ref{eq:homogeneous_recursion_cd}),
  where $K$ and $L$ are real valued symmetric matrices and $L$ is
  positive semidefinite.
  \begin{enumerate}[(i)]
  \item
    If $K$ and $4I - K$ are positive definite then $E_{\rm cd}$ is equivalent
    to a norm on $\RR^{2N}$. If $L = 0$ then $E_{\rm cd}$ is conserved and
    solutions remain bounded if $t \to \pm \infty$. If $L$ is positive
    semidefinite then $\Delta E_{\rm cd}(n) \le 0$
    and solutions remain bounded if $t \to \infty$.
  \item \label{it:item_ii_stability_cd}
    If $K$ is positive semidefinite and $4I - K$ is positive definite 
    and $K$ has one or more zero eigenvalues then $E_{\rm cd}$ is not equivalent to a
    norm. If $L = 0$ then $E_{\rm cd}$ is
    conserved and solutions grow at most linearly if $t \to \pm \infty$.
    If $L$ is positive semidefinite, then $\Delta E_{\rm cd}(n) \le 0$ and
    solutions grow at most linearly if $t \to \infty$.
  \end{enumerate}
\end{theorem}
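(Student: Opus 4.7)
The plan is to establish a discrete energy identity that makes both parts of the theorem transparent: first show that $E_{\rm cd}$ is coercive precisely when $K$ and $4I - K$ are positive definite; next compute $\Delta E_{\rm cd}(n) := E_{\rm cd}(n+1/2) - E_{\rm cd}(n-1/2)$ from the recursion and show it equals a non-positive quantity depending only on $L$; finally translate this into pointwise bounds on $u_n$.

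For the coercivity step, I would note that the map $(u_n, u_{n-1}) \mapsto (v,w) := (u_n - u_{n-1},\, u_n + u_{n-1})$ is a linear isomorphism of $\RR^{2N}$, and in the new variables $E_{\rm cd} = \langle v, (4I-K)v \rangle + \langle w, K w\rangle$. This form is equivalent to the Euclidean norm on $\RR^{2N}$ if and only if both $4I - K$ and $K$ are positive definite, which gives case (i). Under the weaker hypotheses of case (ii), the form still controls $\|u_n - u_{n-1}\|^2$ from above by a constant multiple of $E_{\rm cd}$ (since $4I - K$ is still positive definite), but leaves the $\ker K$ component of $w$ uncontrolled.

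For the dissipation step, I would set $p_n := u_{n+1} - u_{n-1}$ and $q_n := u_{n+1} - 2u_n + u_{n-1}$, and apply the algebraic identity $\langle a, M a\rangle - \langle b, M b\rangle = \langle a - b, M(a+b)\rangle$, valid for symmetric $M$, to each of the two quadratic pieces of $E_{\rm cd}$. With $a = u_{n+1} - u_n$, $b = u_n - u_{n-1}$ one has $a-b = q_n$, $a+b = p_n$; with $a = u_{n+1} + u_n$, $b = u_n + u_{n-1}$ one has $a-b = p_n$, $a+b = q_n + 4u_n$. The cross $K$-contributions $-\langle q_n, K p_n\rangle + \langle p_n, K q_n\rangle$ cancel by symmetry, leaving $\Delta E_{\rm cd}(n) = 4\langle p_n, q_n + K u_n\rangle$. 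Rearranging the homogeneous recursion (\ref{eq:homogeneous_recursion_cd}) yields exactly $q_n + K u_n = -\tfrac{1}{2} L p_n$, so $\Delta E_{\rm cd}(n) = -2 \langle p_n, L p_n\rangle \le 0$ when $L$ is positive semidefinite, and vanishes identically when $L = 0$.

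The stability conclusions then follow with little extra work. In case (i) with $L \ge 0$, the sequence $E_{\rm cd}(n - 1/2)$ is non-increasing and equivalent to $\|u_n\|^2 + \|u_{n-1}\|^2$, giving a uniform bound as $n \to \infty$; when $L = 0$ the scheme is time-reversible, so conservation of $E_{\rm cd}$ delivers the same bound as $n \to -\infty$. In case (ii), monotonicity of $E_{\rm cd}$ together with positive definiteness of $4I - K$ yield $\|u_n - u_{n-1}\| \le C$, and telescoping gives $\|u_n\| \le \|u_0\| + C n$; the two-sided statement for $L = 0$ is obtained by running the argument backward in $n$. The main obstacle is the bookkeeping in the energy identity: the cancellation of the cross $K$-terms and the recognition that the remainder is precisely $-\tfrac{1}{2} L p_n$ as supplied by the recursion is the heart of the proof, after which both cases fall out.
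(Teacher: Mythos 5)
Your proposal is correct and follows essentially the same route as the paper: the same energy function, the same telescoping computation showing $\Delta E_{\rm cd}(n) = -2\langle u_{n+1}-u_{n-1},\,L(u_{n+1}-u_{n-1})\rangle \le 0$ after substituting the recursion, and the same coercivity/monotonicity argument for boundedness versus linear growth. Your write-up merely organizes the algebra a bit more cleanly (via the identity $\langle a,Ma\rangle-\langle b,Mb\rangle=\langle a-b,M(a+b)\rangle$) and spells out the concluding growth estimates that the paper leaves implicit.
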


\begin{proof}
  To prove the result, a bound is derived for
  \begin{equation} 
    \Delta E_{\rm cd}(n) = E_{\rm cd}(n+1/2) - E_{\rm cd}(n-1/2) . 
  \end{equation}
  Using that $K$ is symmetric in combination with basis rules for
  standard products results in 
\begin{equation}
  \begin{aligned}
  \Delta E_{\rm cd}(n) =  {}&
  \langle u_{n+1} - 2 u_n + u_{n-1} , (4I - K) ( u_{n+1} 
  - u_{n-1} ) \rangle 
  \\
  {}& +
  \langle u_{n+1} - u_{n-1} , K ( u_{n+1} + 2 u_n
  + u_{n-1} ) \rangle .
\end{aligned}
\end{equation}  
By using the recursion equation in two places results one obtains the estimate
\begin{equation}
  \begin{aligned}
    \Delta E_{\rm cd}(n) = {}&
  \langle -K u_n - \tfrac{1}{2} L(u_{n+1} - u_{n-1} ) , (4I - K) ( u_{n+1} 
  - u_{n-1} ) \rangle
  \\
  {}& + 
  \langle u_{n+1} - u_{n-1} ,
  4 K u_n + K ( -K u_n - \tfrac{1}{2}  L(u_{n+1} - u_{n-1} ) ) \rangle
  \\
  = {}&
  - 2 \langle u_{n+1} - u_{n-1} ,
  L  ( u_{n+1} - u_{n-1} ) \rangle
  \\
  \le {}& 0 ,  
\end{aligned}
\end{equation}  
where the last inequality holds because by assumption $L$ is positive semidefinite.
The theorem follows from this result.
\end{proof}

\subsection{Eigenvalue analysis}

  There are two cases for the
  eigenvalue analysis of the solutions to
  (\ref{eq:homogeneous_recursion_cd}), depending on whether $K$ and $L$
  have joint eigenvectors. The standard case in which they do is
  worked out because it is used subsection~\ref{subsec:speed_of_convergence}.
  We also analyse the other case.  In the presence of damping
  ($\Im H \neq 0$, hence $L \neq 0$) the intuition is that undamped
  solutions must correspond to vectors in the kernel of $L$, and hence
  to eigenvalues of $K$, in which case it is clear that an undamped
  solution can exist. The purpose of our analysis was to confirm
  that there are no other undamped solutions, because such solutions
  could affect convergence (even though for the convergence proof the result is
  not needed).
    
  So let $v$ be a
  joint eigenvector of $K$ and $L$ with eigenvalues $k$ and $\ell$.
  Solutions to equation (\ref{eq:homogeneous_recursion_cd})
  of the form $u_n = \Lambda^n v$ exist with $\Lambda$ given by
  \begin{equation} \label{eq:Lambda_pm_stability}
    \Lambda_\pm = \frac{1-k/2}{1+\ell/2}
    \pm \frac{1}{1+\ell/2} \sqrt{ -k + k^2/4 + \ell^2/4}
  \end{equation}
  If $k<0$ or $k>4$, these solutions are exponentially growing.
  Thus, for stability, $K$ and $4I - K$ need to be positive
  semidefinite. In addition we require $4I - K$ to be strictly positive
  definite, to avoid a non-decaying solution of the form $(-1)^n v$
  while $k=4$, $\ell>0$.
  
  Next we assume that $K$ and $L$ do, in general, not
  have joint eigenvalues. We study the presence of linearly growing
  solutions and of solutions that neither grow nor decay using the
  first order form of the recursion (\ref{eq:homogeneous_recursion_cd}), that is given by
\begin{equation} \label{eq:system_dim2N_acd}
  y_{n+1} = \Xi y_n + g_n
  , \qquad
  y_n = \begin{bmatrix} u_{n-1} \\ u_n \end{bmatrix}
  , \qquad
  g_n = \begin{bmatrix} 0 \\ f_n \end{bmatrix} ,
\end{equation}
with 
\begin{equation} \label{eq:define_Xi}
  \Xi =
  \begin{bmatrix}
    0 & I \\
    - (I+L)^{-1} (I-L)
    & (I+L)^{-1} (2I-K)
  \end{bmatrix}
\end{equation}

The properties of the solutions are directly related to the
eigenvalues, (generalized) eigenvectors and the Jordan blocks of
$\Xi$. We next list some properties of $\Xi$ that relate to
eigenvalues $\xi$ of $\Xi$ with $| \xi | = 1$. These correspond to
undamped solutions. 
\begin{theorem} \label{th:conserved_and_linearly_growing_sols_Icd}
  Assume (\ref{eq:stability_cond_K_L}),
  (\ref{eq:stability_cond_cd_upper}). The following properties hold
  for the eigenvalues, (generalized) eigenvectors and Jordan blocks of
  $\Xi$:
\begin{enumerate}[(i)]
\item \label{it:i}
  all eigenvectors are of the form $[u,\xi u]$, with $u \in \CC^N$;
\item \label{it:ii}
  there are no eigenvalues $\xi = -1$;
\item \label{it:iii}
  for $|\xi| = 1$, $\xi \neq 1$ there are no Jordan blocks of size $>1$;
\item \label{it:iv}
  for $|\xi| = 1$, $\xi \neq 1$ the $u$ in $[u,\xi u]$ is in $\ker
L$, and is an eigenvector of $K$ and 
$\xi$ follows from (\ref{eq:Lambda_pm_stability});
\item \label{it:v}
  for $\xi = 1$, there are no Jordan blocks of size $>2$,
an eigenvector is of the form
$v = [ u, u]$ with $u \in \ker K$ and if $w$ is a generalized
eigenvector such that $\Xi w = w
+ v$, $v$ an eigenvector, then $v = [u,u]$ with $u$ also in $\ker L$, and
$w = [ w_1 , w_1 + u ]$ with $w_1 \in \ker K$.
\end{enumerate}
\end{theorem} 

\begin{proof}
  Claim (\ref{it:i}) is obvious.

  Regarding (\ref{it:ii}), the equations $\Xi [ u ,\xi u]^T = \xi [ u ,\xi u]^T$
  with $\xi = -1$ gives $(4I - K ) u = 0$ which is impossible because
  $4I - K$ is positive definite by assumption.
  
  Claim (\ref{it:iii}) follows from the energy growth equations.
  Solution to second order recursion would be of the form
  $u_n = \xi^n w + n \xi^{n-1} v$, and the dominant term in the energy in the
  limit $n \to \infty$ would be 
  $\langle n (1-\xi) v , (4I - K) n (1- \xi) v \rangle$. This term would
  be unbounded as $n \to \infty$ which is impossible, hence any Jordan
  block of size $>1$ must be associated with an eigenvalue $\xi = 1$.

  In the situation of claim (\ref{it:iv}), solutions are of the form $u_n = \xi^n
  v$ with $v \in \CC^N$. Energy must be conserved hence
  $\langle u_{n+1} - u_{n-1} , L (u_{n+1} - u_{n-1}) \rangle = 0$ 
  hence $(\xi - \xi^{-1} ) u \in \ker L$, hence $u \in \ker L$. But then 
  it follows that $K u = ( - \xi^{-1} + 2 - \xi) u$, so that $u$ is a
  simultaneous eigenvector of $K$, and $L$.

  The first part of part (\ref{it:v}) follows from the bound on the
  energy. Jordan blocks of size $>2$ would lead to solutions of the form
  \begin{equation}
    u_n = w^{(2)} + n w^{(1)} + n (n-1) v ,
  \end{equation}
  The dominant contribution to the term
  $\langle u_n-u_{n-1} , (4I-K) (u_n-u_{n-1}) \rangle$ would be 
  $\langle  2(n-1) v , (4I-K) 2 (n-1) v \rangle$, which would be
  unbounded, which is not possible.
  The second part of (\ref{it:v}) follows from the form of $\Xi$. 
  If eigenvector is $V = [ v_1, v_2]^T$, $v_j \in \CC^N$, then $v_1 = v_2$ and
  \begin{equation}
    -(1-L) v_1 + (2I-K) v_1 = (1+L) v_1
  \end{equation}
  hence $K v_1 = 0$.  The third part of (\ref{it:v}) follows from the
  form of $\Xi$.  Working out the first line of equation $\Xi w = w + v$
  yields $w_2 = w_1 + u$, working out the second line gives $L u = 0$.
\end{proof}

\subsection{Convergence of the approximate solution operators $S_T^{\rm
  acd}$}
\label{subsec:convergence}

In this subsection we will show that the time-domain approximate
solution operator $S_T^{\rm acd}$ converges to the
true solution operator.

We first give our conventions regarding the Fourier transform, and
discuss the Fourier transform of functions defined on  $\Delta t \ZZ$.
Let $g(n)$ be a function $\ZZ$. Considering $n$ as a position
variable, we have the following Fourier transform / inverse Fourier
transform pair, denoting the frequency by $\nu$
\begin{equation} \label{eq:FT_Z}
  \begin{aligned}
    \widehat{g}(\nu)
  = {}& \sum_{n \in \ZZ} g(n) e^{-i n \nu} 
 ,  \qquad\qquad
  g(n) = {}& \frac{1}{2\pi} \int_{-\pi}^\pi \widehat{g}(\nu)
    e^{i n \nu} \, d \nu .
\end{aligned}
\end{equation}
Here $\nu$ in the periodic interval $[-\pi,\pi]$ which we will also
denote by
$\TT_{2 \pi}$. We will also denote the Fourier transform of a function
$g$ by $\mathcal{F} g$.

When considering discrete approximations of continuous quantities, it
can be convenient to write $u( n \Delta t)$ instead of $u_n$ and
$f( n \Delta t)$ instead of $f_n$. In other words, $u$ and $f$ are
considered as functions $u(t)$, $f(t)$ with $t  \in \Delta t \ZZ$, and
not as two-sided sequences. The forward and inverse Fourier
transforms between functions on $\Delta t \ZZ$ and functions on
$\TT_{2\pi/\Delta t}$ are defined by
\begin{equation}
  \begin{aligned}
  \widehat{f}(\tau) = \Delta t \, \sum_{t \in \Delta t \ZZ} f(t) e^{-i
    t \tau} , 
  \qquad \qquad
  f(t) = \frac{1}{2\pi} \int_{-\pi/\Delta t}^{\pi /\Delta t} \widehat{f}(\tau)
    e^{i t \tau} \, d \tau .
\end{aligned}
\end{equation}

For functions of $t \in \Delta t \ZZ$ the Dirac delta function will be
defined including a factor $\Delta t^{-1}$, i.e.\
\begin{equation} \label{eq:delta_fun_DeltatZ}
  \delta(t) = \left\{
    \begin{array}{ll}
      \frac{1}{\Delta t} & \text{if $t=0$}\\
      0 & \text{otherwise}
    \end{array}
  \right.
  , \qquad\qquad
  \FourierF \delta (\tau) = 1 .
\end{equation}
For function $g(t), h(t)$, $t \in \Delta \ZZ$, we have
\begin{equation} \label{eq:FourierTransformFG}
  \widehat{g h} =
  \frac{1}{2\pi} \widehat{g} \ast \widehat{h} 
\end{equation}
where $\ast$ denotes convolution on $\TT_{2\pi/\Delta t}$.

In this subsection we will treat $u$ and $f$ as functions on $\Delta
t \ZZ$. 
Associated with the integrator $I_{\rm acd}$ is then a second order
difference operator $C_{\rm acd}$, defined as
\begin{equation} \label{eq:operator_Cacd}
  C_{\rm acd}(u)(t) = \frac{\alpha}{\Delta t^2}
\left[ ( 1 + \tfrac{1}{2} L ) u(t+\Delta t) + (- 2+K) u(t) + (I -
  \tfrac{1}{2} L) u(t-\Delta t) \right] 
\end{equation}
where $K, L$ are as defined in (\ref{eq:K_L_frequency_adapted_central_differences}).
Solutions to the time-integration satisfy
\begin{equation} \label{eq:difference_equation_acd}
  C_{\rm acd}(u)(t) = f(t) .
\end{equation}
The left-hand side can be see as the convolution of $u$ with a kernel
$\Gamma_{\rm acd}$ that is a matrix valued function on $\Delta t \ZZ$.
(The explicit expression for
$\Gamma_{\rm acd}$ is easily derived from (\ref{eq:operator_Cacd}).)
In the Fourier domain the convolution becomes a multiplication and we have
\begin{equation}
  \widehat{\Gamma}_{\rm acd}(\tau) \widehat{u}(\tau) =
  \widehat{f}(\tau) .
\end{equation}

From 
proposition~\ref{prop:reproduce_symbol_at_omega} it follows that
\begin{equation} \label{eq:symbol_equality_property_acd}
  \widehat{\Gamma}_{\rm acd}(\omega) = - \omega^2 I + i \omega B + A .
\end{equation}
In addition we know that $\widehat{\Gamma}$ is $C^\infty$, because
$\Gamma$ is nonzero only for $t \in \{ -\Delta t, 0 , \Delta t \}$.

By $\Phi_{\rm acd}(t)$, $t \in \Delta t \ZZ$ we will denote the causal
Green's function associated with
(\ref{eq:difference_equation_acd}). It is a function
$\Delta t \ZZ \to \CC^{N \times N}$ 
defined as the solution to
\begin{equation} 
  C_{\rm acd} \Phi_{\rm acd} (t) = I \delta(t) ,
  \qquad \text{and} \qquad 
  \Phi_{\rm acd}(t) = 0 \text{ if } t \le  0 ,
\end{equation}
where $\delta(t)$ is as defined in (\ref{eq:delta_fun_DeltatZ}).
Assuming that the conditions in
(\ref{eq:stability_cond_K_L}), (\ref{eq:stability_cond_cd_upper})
hold, $\Phi_{\rm acd}$ grows at most polynomially if $t \to \infty$
(by Theorem~\ref{th:stability_cd}), and 
its Fourier transform $\widehat{\Phi}_{\rm acd}$ is well defined as a matrix valued
distribution on the circle $\TT_{2\pi/\Delta t}$.

We will first show that the approximate solution operator
$S_T^{\rm acd}$ can be expressed as a weighted mean of
$\widehat{\Phi}(\tau)$ around $\tau = \omega$.
\ReviseRed{For this purpose we first define a helper function 
$\psi_{1/T}$, depending on $\chi$ and various parameters.
Let $\theta_T(t)$, $t \in \Delta t \ZZ$ be given by
\begin{equation}
  \theta_T(t) = \chi( \frac{t}{2 \pi \omega^{-1} T} ) ,
\end{equation}
then $\psi_{1/T}$ is defined by
\begin{equation} \label{eq:define_psi_oneoverT}
  \psi_{1/T}(\tau) = \frac{1}{2 \pi} \mathcal{F} \theta_T(\tau) ,
  \qquad \tau \in \TT_{2\pi/\Delta t} .
\end{equation}}%
We have the following result.

\begin{theorem} \label{th:convolution_formula_S_T_acd}
Assume $\psi_\epsilon$ and $\Phi_{\rm acd}$ are as just defined, then 
\begin{equation} \label{eq:convolved_Fourier_transform_acd}
  S_{T}^{\rm acd} =  \psi_{1/T} \ast \widehat{\Phi}_{\rm acd}(\omega) .
\end{equation}
\end{theorem}

In words, $s_T^{\rm acd}$ equals the convolution product of
$\psi_{1/T}$ and $\widehat{\Phi}_{\rm acd}$, evaluated at $\tau =
\omega$. The convolution product is taken on the torus
$\TT_{2\pi/\Delta t}$. 

\begin{proof}
Let $\tilde{T} = 2\pi \omega^{-1} T$.
By linearity, the time-domain approximate solution $S_T(F)$ satisfies
\begin{equation}
  S_T^{\rm acd}(F) = e^{- i \omega \tilde{T}}
  \Delta t \sum_{t \in \Delta t \ZZ, 0 \le t < \tilde{T}}  \Phi(\tilde{T}-t)
  \chi(1 - t/\tilde{T}) e^{i \omega t} F 
\end{equation}
In this formula we easily recognize a Fourier transform
\begin{equation}
  S_T^{\rm acd}(F) = \left[
  \Delta t \sum_{t \in \Delta t \ZZ}  \Phi(\tilde{T}-t) \theta_T(\tilde{T}-t)
  e^{-i \omega (\tilde{T}-t)}  \right]  F
= \left[
  \mathcal{F} ( \theta_T \Phi )(\omega)
  \right] F .
\end{equation}
The equality (\ref{eq:convolved_Fourier_transform_acd}) follows because
the Fourier transform of the product is $\frac{1}{2\pi}$ times
the convolution product of the Fourier transforms,
see (\ref{eq:FourierTransformFG}).
\end{proof}

We continue by studying the family of functions $\psi_{1/T}(\tau)$.
\begin{lemma} \label{lem:psi_hatchi}
  Let $\tilde{T} = 2\pi \omega^{-1} T$.
  Suppose $\chi$ is a $C^\infty$ admissible window function and $\psi_{1/T}$ is defined
  as in (\ref{eq:define_psi_oneoverT}).
The function $\psi_{1/T}(\tau)$ satisfies
\begin{equation} \label{eq:psi_from_hatchi}
  \psi_{1/T}(\tau) =
  \frac{\tilde{T}}{2\pi} \sum_{j \in \ZZ} \widehat{\chi}( (\tau + j
  \frac{2 \pi}{\Delta t}) \tilde{T}) . 
\end{equation}
\end{lemma}
\begin{proof}
  The Fourier transform of the function $s \mapsto \chi(s / \tilde{T})$, $s \in \RR$ is given
  by $\sigma \mapsto \tilde{T} \widehat{\chi}(\sigma \tilde{T})$,
  $\sigma \in \RR$. When $s$ is restricted to $\Delta t \ZZ$
  then the Fourier transform is a function on the torus $\TT_{2\pi /
    \Delta t}$ given by
  \begin{equation}
    \tilde{T} \sum_{j \in \ZZ} \widehat{\chi}( (\tau + j 2 \pi /
    \Delta t) \tilde{T}) ,
    \qquad
    \tau \in \TT_{2\pi / \Delta t} .
  \end{equation}
  This shows (\ref{eq:psi_from_hatchi}).
\end{proof}

Because $\chi$ is $C^\infty$ and compactly supported, its Fourier
  transform $\widehat{\chi}(\sigma)$, defined for
  $\sigma \in \RR$, satisfies
  \begin{equation} \label{eq:decay_property_chi}
    \frac{d^m\widehat{\chi}}{d\sigma^m} = O( |\sigma|^{-N}) , \qquad \sigma \to \pm \infty
  \end{equation}
  for any $m \ge 0$ and $N \ge 0$.
  Furthermore, $\frac{1}{2\pi} \widehat{\chi}(\sigma)$ has integral
  $1$. As a consequence $\psi_{1/\epsilon}$ is an
  approximation to the identity (this is actually proven in the upcoming proof).
  This fact and Theorem~\ref{th:convolution_formula_S_T_acd}
imply the convergence of the time-domain approximate solutions to the
correct solutions.
\begin{theorem} \label{th:convergence_acd}
  If
  \begin{equation} \label{eq:H_non-singular}
    \text{$H = -\omega^2 I + i \omega B + A$ is non-singular}
  \end{equation}
  and $A,B,\omega$ and $\Delta t$ are such that $K$, $L$ defined in
  (\ref{eq:K_L_frequency_adapted_central_differences}) satisfy
  the stability conditions (\ref{eq:stability_cond_K_L}) and (\ref{eq:stability_cond_cd_upper})
  then
  \begin{equation}
    \lim_{T \to \infty} S_T^{\rm acd} = H^{-1} .
  \end{equation}
\end{theorem}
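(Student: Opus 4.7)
The plan is to exploit the convolution representation $S_T^{\rm acd} = (\phi_{1/T} \ast \widehat{\Phi}_{\rm acd})(\omega)$ provided by Theorem~\ref{th:convolution_formula_S_T_acd} and show that $\phi_{1/T}$ converges to a Dirac delta at the origin of the torus $\TT_{2\pi/\Delta t}$ in a sense strong enough that the convolution against the distribution $\widehat{\Phi}_{\rm acd}$, evaluated at $\tau = \omega$, tends to $\widehat{\Phi}_{\rm acd}(\omega) = H^{-1}$.

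First I would establish three properties of the kernel $\phi_{1/T}$. Since $\psi_T(t) = \chi(1 - |t/(2\pi\omega^{-1}T)|)$ is $C^\infty$, supported in an interval of length of order $T$, and satisfies $\psi_T(0) = \chi(1) = 1$, Fourier inversion gives $\int \phi_{1/T}(\tau)\,d\tau = 1$; the scaling relation $\psi_T(\cdot) = \psi_1(\cdot/T)$ shows that $\phi_{1/T}$ concentrates on a scale of order $1/T$ near $\tau = 0$; and repeated integration by parts, enabled by the smoothness of $\chi$, yields, for every $\delta > 0$ and every $N$, a bound $|\phi_{1/T}(\tau)| = O(T^{-N})$ uniformly on $\{|\tau|\ge\delta\}$, with analogous decay for all derivatives.

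Next I would analyze $\widehat{\Phi}_{\rm acd}$ near $\tau = \omega$. Theorem~\ref{th:stability_cd} together with the stability assumption guarantees at most polynomial growth of $\Phi_{\rm acd}(t)$, so $\widehat{\Phi}_{\rm acd}$ is a matrix-valued distribution of finite order on $\TT_{2\pi/\Delta t}$. On the open set where $\widehat{\Gamma}_{\rm acd}$ is invertible, it coincides with the smooth function $\widehat{\Gamma}_{\rm acd}(\tau)^{-1}$. By (\ref{eq:symbol_equality_property_acd}) and (\ref{eq:H_non-singular}), $\widehat{\Gamma}_{\rm acd}(\omega) = H$ is invertible, and since $\widehat{\Gamma}_{\rm acd}$ is $C^\infty$ (the kernel $\Gamma_{\rm acd}$ has finite support), there is a neighborhood $V$ of $\omega$ on which $\widehat{\Phi}_{\rm acd}$ agrees with the smooth function $\widehat{\Gamma}_{\rm acd}(\tau)^{-1}$, taking value $H^{-1}$ at $\omega$.

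To conclude, I would pick a $C^\infty$ cutoff $\eta$ equal to $1$ on a smaller neighborhood of $\omega$ and supported in $V$, and split $\widehat{\Phi}_{\rm acd} = \eta \widehat{\Phi}_{\rm acd} + (1-\eta)\widehat{\Phi}_{\rm acd}$. The convolution of $\phi_{1/T}$ with the smooth piece $\eta\widehat{\Phi}_{\rm acd}$, evaluated at $\omega$, tends to $(\eta\widehat{\Phi}_{\rm acd})(\omega) = H^{-1}$ by the standard approximation-to-identity argument. The remaining piece $(1-\eta)\widehat{\Phi}_{\rm acd}$ is a distribution supported at positive distance from $\omega$, and its pairing against the translate $\phi_{1/T}(\omega - \cdot)$ is $O(T^{-N})$ for every $N$, by the rapid decay of $\phi_{1/T}$ and its derivatives away from $0$. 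The main obstacle is precisely this last step: $\widehat{\Gamma}_{\rm acd}(\tau)^{-1}$ may possess genuine distributional singularities at frequencies $\tau\neq\omega$, so $\widehat{\Phi}_{\rm acd}$ is not a pointwise function there; what rescues the argument is that the $C^\infty$ regularity of the window $\chi$ forces $\phi_{1/T}$ to be rapidly decreasing in $T$ outside any neighborhood of the origin, and this rapid decay dominates the finite order of the singularities inherited from the polynomial growth in Theorem~\ref{th:stability_cd}.
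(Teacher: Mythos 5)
Your proposal is correct and follows essentially the same route as the paper: the convolution representation of Theorem~\ref{th:convolution_formula_S_T_acd}, the identification $\widehat{\Phi}_{\rm acd}=\widehat{\Gamma}_{\rm acd}^{-1}$ near $\tau=\omega$ via (\ref{eq:symbol_equality_property_acd}) and (\ref{eq:H_non-singular}), and an approximation-to-the-identity limit. The only difference is that you make explicit, via the cutoff $\eta$ and the rapid decay of $\phi_{1/T}$ away from the origin against the finite-order distributional tail of $\widehat{\Phi}_{\rm acd}$, the step the paper compresses into ``standard results in integration theory and distribution theory.''
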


\begin{proof}
  The Fourier transform $\widehat{\Phi}_{\rm acd}$ satisfies
  \begin{equation}
    \widehat{\Gamma}_{\rm acd}(\tau) \widehat{\Phi}_{\rm acd}(\tau) = I .
  \end{equation}
  Note that
  $\widehat{\Gamma}_{\rm acd}(\tau)$ is a $C^\infty$ function and 
  $\widehat{\Phi}_{\rm acd}$ is a distribution and that this equality
  is in the sense of distributions.
  By the assumption (\ref{eq:H_non-singular}) and the property
  (\ref{eq:symbol_equality_property_acd}), the matrix valued function
  $\widehat{\Gamma}_{\rm acd}(\tau)$ is non-singular on a neighborhood
  of $\tau = \omega$. It follows that
  $\widehat{\Phi}_{\rm acd}(\tau)$ is continuous on a neighborhood
  of $\tau = \omega$ and that
  \begin{equation}
    \widehat{\Phi}_{\rm acd}(\omega)
    = \widehat{\Gamma}_{\rm acd}(\omega)^{-1}
    = H^{-1} .
  \end{equation}

  We define a periodic cutoff $\theta_{\rm per}$. This will be
  supported in $[(-\pi-\epsilon)/\Delta t, (\pi+\epsilon)/\Delta t]$
  for some $\epsilon < \pi$ and such that
  \begin{equation}
    \sum_{j=-\infty}^\infty \theta_{\rm per}(\tau + j 2 \pi/\Delta t) = 1
  \end{equation}
  for all $\tau$. 
  From (\ref{eq:convolved_Fourier_transform_acd}) we write $S_T^{\rm
    acd}$ as
  \begin{equation} \label{eq:integral_for_Sacd}
    S_T^{\rm acd} = \int_\RR \theta_{\rm per}(\tau)
    \psi_{1/T}(\tau) \widehat{\Phi}_{\rm acd}(\omega - \tau) \, d \tau .
  \end{equation}

  Next we insert the sum, and distinguish between the cases $j=0$ and
  $j \neq 0$. The case $j=0$ is further split into the set near $\tau
  = 0$ and away from $\tau = 0$ using a cutoff function $\theta_1$
  that is supported on a neighborhood of $\tau=0$ where
  $\widehat{\Phi}(\omega-\tau)$ is smooth and is equal to one on a
  smaller neighborhood of $0$. 
  We thus have three terms
  \begin{equation}
    \begin{aligned}
      S_T^{\rm acd}
      = {}&
      \frac{\tilde{T}}{2\pi} \bigg[ \sum_{j \in \ZZ, j \neq 0}
      \int_\RR \theta_{\rm per}(\tau)
      \widehat{\chi}(\tilde{T}(\tau + j 2 \pi / \Delta t) ) \widehat{\Phi}_{\rm acd}(\omega-\tau) \, d \tau     
      \\
      {}& + 
      \int_\RR \theta_{\rm per}(\tau) (1-\theta_1(\tau))
      \widehat{\chi}(\tilde{T}\tau) \widehat{\Phi}_{\rm acd}(\omega-\tau) \, d \tau
      \\
      {}& +       
      \int_\RR \theta_{\rm per}(\tau) \theta_1(\tau)
      \widehat{\chi}(\tilde{T}\tau) \widehat{\Phi}_{\rm acd}(\omega-\tau) \, d \tau \bigg]       
    \end{aligned}
  \end{equation}
  In the first term on the right hand side, the norms
  $\| \theta_{\rm per}(\tau)
  \widehat{\chi}(\tilde{T}(\tau + j 2 \pi / \Delta t) )\|_{C^k}$
  decay as $\tilde{T}^{-N}$ and $j^{-N}$ for any $N$, because of
  (\ref{eq:decay_property_chi}).
  Since $\widehat{\Phi}_{\rm acd}$ is a distribution of finite order,
  this term goes to zero  as $\tilde{T} \to \infty$.
  The second term goes to zero similarly.

  For the third term, we may assume that $\theta_{\rm per} = 1$ on the
  support of $\theta_1(\tau)$. We may expand
  \begin{equation} \label{eq:expand_phi_around_omega}
  \widehat{\Phi}_{\rm acd}(\omega-\tau)
  = \widehat{\Phi}_{\rm acd}(\omega) + \tau g_1(\tau)
\end{equation}
where $g_1(\tau)$ is bounded by a constant $M$ (actually it is bounded
by the first derivative of
$\widehat{\Phi}_{\rm acd}(\omega-\tau)$ on the support of $\theta_1$).
We write the third term and then apply a
change of variables as follows
\begin{equation}
  \begin{aligned}
    {}& \frac{\tilde{T}}{2\pi}  \int_\RR \theta_1(\tau)
  \widehat{\chi}(\tilde{T} \tau) \bigg[ \widehat{\Phi}_{\rm
    acd}(\omega) - \tau g_1(\tau) \bigg] \, d \tau
  \\
  {}& = \frac{1}{2\pi} \widehat{\Phi}_{\rm acd}(\omega)
  \int_\RR \theta_1(\sigma/\tilde{T}) \widehat{\chi}(\sigma) \, d \sigma
  + \frac{1}{2\pi \tilde{T}} \int_\RR \theta_1(\sigma/\tilde{T})
  \widehat{\chi}(\sigma) \sigma g_1( \sigma/\tilde{T}) \, d \sigma .
\end{aligned}
\end{equation}
The first term on the right hand side converges to
$\widehat{\Phi}_{\rm acd}(\omega)$, since
\begin{equation}
  \frac{1}{2\pi} \int_\RR \theta_1(\sigma/\tilde{T}) \widehat{\chi}(\sigma) \, d
  \sigma
  \quad \to \quad 
  \frac{1}{2\pi} \int_\RR \widehat{\chi}(\sigma) \, d \sigma = 1
  \quad \text{if $\tilde{T} \to \infty$} 
\end{equation}
(the latter equality by the inverse Fourier transform), while the
second term goes to zero, since it can be bounded by 
\begin{equation}
\frac{1}{2\pi \tilde{T}} \| \int_\RR \theta_1(\sigma/\tilde{T})
\sigma g_1( \sigma/\tilde{T}) \widehat{\chi}(\sigma)  \, d \sigma \|
\le
\frac{M}{2\pi \tilde{T}} \int_\RR \left| \sigma \widehat{\chi}(\sigma) \right| \, d \sigma 
\end{equation}
where the latter integral is finite by (\ref{eq:decay_property_chi}). This concludes the
proof.
\end{proof}

\subsection{Speed of convergence}
\label{subsec:speed_of_convergence}

The speed of convergence is the topic of this and the next
subsection. We primarily use a spectral decomposition and
Theorem~\ref{th:convolution_formula_S_T_acd} and restrict to the case
that $\Im H = 0$. In this case $\Re H$ has an orthogonal eigenvalue
decomposition, so that an essentially scalar analysis can be used.

After applying a change of basis, $H$ may be assumed diagonal, given
by
\begin{equation} \label{eq:H_Re_diagonal}
  H = \begin{bmatrix}
    \lambda_1 &&\\
    & \ddots &\\
    && \lambda_N    
  \end{bmatrix}
\end{equation}
where $\lambda_1 \le \lambda_2 \le \ldots \le \lambda_N$.
Matrices
such as $K$, $\Phi_{\rm acd}(t)$ and $\widehat{\Phi}_{\rm acd}(\tau)$
are also diagonal. The diagonal components can be obtained by applying
our approach to the scalar problem in which $H = \lambda$, where we
must take into account that $\omega$ and $\Delta t$ are chosen using
using the set of eigenvalues $\{ \lambda_1, \ldots, \lambda_N \}$ of $H$. So,
for example there is the requirement
\begin{equation}
  \omega^2 \ge | \lambda_1 | 
\end{equation}
for $\omega$. In the analysis of the scalar problem we will obtain estimates uniformly in
$\lambda$. With these considerations it suffices to analyse the scalar problem. 

For the quantities of this scalar problem we use the following notations. 
We write $k(\lambda)$ for the matrix $K$ with the above scalar
problem, and $\phi_{\rm acd}(t,\lambda)$ for the $\Phi_{\rm
  acd}(t)$. For example, according to definition equations
(\ref{eq:A_B_from_H}) and (\ref{eq:K_L_frequency_adapted_central_differences})
the function $k(\lambda)$ is given simply by
\begin{equation}
  k(\lambda) = \frac{\Delta t^2}{\alpha} (\lambda + \omega^2) .
\end{equation}
We will study the decay of the error 
\begin{equation} \label{eq:define_err}
  \text{error}
  = s_{T}^{\rm acd}(\lambda) - \lambda^{-1} ,
\end{equation}
as a function of $\lambda$ and $T$, in the limit $T \to \infty$. We wil
take into account the possibility that $|\lambda|$ is small, obtaining
appropriate uniform estimates.

The first step is an explicit construction of
$\phi_{\rm acd}(t,\lambda)$, which amounts to solving the recursion
given by (\ref{eq:define_Xi}), (\ref{eq:system_dim2N_acd}) for the
case that $L=0$ and $K$ is scalar.

\begin{proposition}
Let and $U(n)$ be the unit step function of
(\ref{eq:define_unit_step}). In case $0<k<4$
\begin{equation} \label{eq:scalar_phi_acd}
  \phi_{\rm acd}(n \Delta t;\lambda) =
  \frac{\Delta t}{\alpha}
  \frac{\Lambda_+^n U(n) - \Lambda_-^n U(n)}{\Lambda_+ - \Lambda_-}  . 
\end{equation}
where $\Lambda_\pm = (1-k/2) \pm i \sqrt{1 - (1-k/2)^2}$. In case $k=0$
\begin{equation} \label{eq:scalar_phi_acd_keq0}
  \phi_{\rm acd}(n \Delta t;\lambda) 
  = \frac{n \Delta t}{\alpha} U(n) .
\end{equation}
\end{proposition}

\begin{proof}
  The second order recursion in this case becomes
\begin{equation}
  \frac{\alpha}{\Delta t^2}
  [ u(t+\Delta t) + (-2+k(\lambda)) u(t) + u(t-\Delta t) ] = f(t)
\end{equation}
This can be written as a first order recursion
\begin{equation}
  \begin{bmatrix}
    u( n\Delta t) \\
    u((n+1)\Delta t)
  \end{bmatrix}
  =
  \Xi
  \begin{bmatrix}
    u( (n-1) \Delta t) \\
    u( n\Delta t)
  \end{bmatrix}
  +
  \frac{\Delta t^2}{\alpha}
    \begin{bmatrix}
    0 \\
    f(n \Delta t)
  \end{bmatrix}
\end{equation}
where
\begin{equation}
  \Xi =
  \begin{bmatrix}
    0 & 1 \\
    -1 & 2-k(\lambda) 
  \end{bmatrix} .
\end{equation}
It follows that
\begin{equation}
  \phi_{\rm acd}(n \Delta t,\lambda) =
  \left\{
    \begin{array}{ll}
      0 & \text{if $n \le 0$} \\
      \frac{\Delta t}{\alpha} \left[ \Xi^{n} \right]_{(1,2)}
        & \text{if $n > 0$}
    \end{array}
  \right.
\end{equation}
where, for a matrix $M$, $[ M ]_{(i,j)}$ denotes the $(i,j)$ component
of $M$.

Assuming $0 < k(\lambda) < 4$, the matrix $\Xi$ has two eigenvalues, given by,
cf.\ (\ref{eq:Lambda_pm_stability})
\begin{equation}
  \Lambda_\pm(\lambda) = (1-k(\lambda)/2) \pm 
  i \sqrt{ 1 - \left( 1-k(\lambda)/2 \right)^2}
\end{equation}
The decomposition of $\Xi$ is 
\begin{equation}
  \Xi = V \begin{bmatrix}
    \Lambda_-(\lambda) & 0 \\
    0 & \Lambda_+(\lambda)
  \end{bmatrix}
  V^{-1} , \qquad 
  V = \begin{bmatrix}
    1 & 1 \\
    \Lambda_- & \Lambda_+ .
  \end{bmatrix}
\end{equation}
Thus, for $0 < k(\lambda) < 4$,
$\left[ \Xi^{n} \right]_{(1,2)}
=
\frac{\Lambda_+^n - \Lambda_-^n }{\Lambda_+ - \Lambda_-}$, 
which implies (\ref{eq:scalar_phi_acd}).

If $k(\lambda)=0$, 
$
\left[\Xi^n \right]_{1,2} = n
$
which implies (\ref{eq:scalar_phi_acd_keq0}).
\end{proof}  

Associated with $\Lambda_+$ is the following frequency parameter
\begin{equation} \label{eq:define_Omega_of_lambda}
  \Omega(\lambda)
  = (1/\Delta t) \arg(\Lambda_+)
  = (1/\Delta t) \arccos(1-k(\lambda)/2) .
\end{equation}
This implies that $\Lambda_\pm^n = e^{ \pm i n \Delta t
  \Omega(\lambda)}$.
  
The Fourier transform $\widehat{\phi}_{\rm acd}(\tau, \lambda)$ can
now be obtained from the Fourier transform of $U(n)$ given in
Proposition~\ref{prop:FT_unit_step}
in Appendix~\ref{app:Fourier_transform_unit_step},
and the rule that modulation of a
function corresponds to translation of its Fourier transform. This leads to the
following
\begin{proposition}
  If $0 < k(\lambda) < 4$ and $\Omega(\lambda)$ is as in
  (\ref{eq:define_Omega_of_lambda}) then
  \begin{equation}
    \widehat{\phi}_{\rm acd}(\tau, \lambda)
    = \frac{1}{2i \alpha \sin (\Omega \Delta t)}
    \bigg[ \widehat{U}( \Delta t (\tau - \Omega(\lambda) ) )
    - \widehat{U}( \Delta t (\tau + \Omega(\lambda) ) ) \bigg] .
  \end{equation}
  If $k(\lambda) = 0$ then
  \begin{equation}
    \widehat{\phi}_{\rm acd}(\tau, \lambda)
    = \frac{i}{\alpha} \widehat{U}'( \Delta t \tau) .
  \end{equation}
\end{proposition}

We next state and prove a result concerning the convergence
speed. Multiple bounds are provided, each uniform in $\lambda$
over a different subsets of the domain containing $\lambda$. This is
related to special behavior that occurs for certain values of $\lambda$.
A first special case is when 
$\lambda$ is near $-\omega^2$ which corresponds to the case that
$k(\lambda)$ is near 0. In this case the factor
$\frac{1}{\sin (\Omega \Delta t)}$ becomes large, or, in the limiting
case that $k(\lambda)=0$, the formula for the $\widehat{\phi}_{\rm
  acd}$ is of a special form involving $\widehat{U}'$ instead of
$\widehat{U}$ (note that $\widehat{U}'$ is more singular).
A second special case is when $\lambda$ is near zero. In this case
$\omega$ is near $\Omega(\lambda)$. The closer $\omega$ is to the
singularity at $\Omega(\lambda)$, the narrower $\psi_{1/T}$ must be to
have a good approximation, so the larger $T$ should be. In each case
we assume $k(\lambda) < 4$ uniformly, which implies that
\begin{equation}
  \lambda < \text{constant} < \frac{4 \alpha}{\Delta t^2} - \omega^2 .
\end{equation}
The assumption that $\chi$ is $C^\infty$ is replaced by different, more
specific assumptions on $\widehat{\chi}$, so that the effect of the
precise properties of $\chi$ becomes clearer. 

\begin{theorem} \label{th:speed_of_convergence}
  Instead of the assumption that $\chi$ is $C^\infty$, assume that
  $\chi$ is a $C^0$ admissible weight function that satisfies the
  following additional assumptions:
  First there is an integer $N_1 \ge 3$ such that
\begin{align}
  \label{eq:decay_hat_chi}
  \widehat{\chi}(\sigma) = O(|\sigma|^{-N_1})  \qquad
  \widehat{\chi}'(\sigma) = O(|\sigma|^{-N_1}) \qquad
  \widehat{\chi}''(\sigma) = O(|\sigma|^{-N_1})  \qquad \sigma \to
                                  \pm \infty 
\end{align}
and secondly there is an integer $N_2 \ge 1$ such that
\begin{equation} \label{eq:assumption_N2}
  \int \left| \widehat{\chi}(\sigma)  \sigma^{N_2} \right| \, d\sigma < \infty
  \qquad \text{and the moments of $\widehat{\chi}$ of order $1 ,\ldots, N_2-1$ are zero.}
\end{equation}
Case (I): Suppose $\gamma < 0$ and $N^{\brackI} = \min(N_1+2,N_2)$. There
is a constant $C^{\brackI}$ such that
\begin{equation} \label{eq:estimate_caseI}
  | s_T^{\rm acd}(\lambda) - \lambda |
  \le
  C^{\brackI} T^{-N^{\brackI}} ,
  \qquad \text{uniformly in $\lambda \in [-\omega^2,\gamma]$}
\end{equation}
Case (II): Suppose $-\omega^2 < \gamma < 0 < \delta <
\frac{4\alpha}{\Delta t^2} - \omega^2$ and $N^{\brackII} =
\min(N_1+1,N_2)$. There is a constant $C^{\brackII}$ and a constant $\tilde{C}$ such that
\begin{equation}
  | s_T^{\rm acd}(\lambda) - \lambda |
  \le
  C^{\brackII} T^{-N^{\brackII}} |\lambda|^{-N^{\brackII}-1} ,
  \qquad \text{uniformly in $\lambda \in [\gamma, 0) \cup (0, \delta]$} ,
\end{equation}
assuming $T > \tilde{C} /|\lambda|$.
\end{theorem}

In the remainder of this subsection we will prove this result. In
subsection~\ref{subsec:discuss_convergence_speed_result} we will
discuss the result. 

\begin{lemma} \label{lem:estimate_chi_Tnu}
  Suppose $\chi$ satisfies (\ref{eq:decay_hat_chi}). Let
  $g_j(\nu)$, $\nu \in \RR$ be defined by
  $g_j(\nu) = \widehat{\chi}(T (\nu + j 2 \pi))$. Suppose $\pm \nu$ is in an interval
  $[ C_1 \delta, C_2 \delta]$, $0< C_1<C_2$, where $\delta$ is in
  $(0,1]$ and denote by
  $g_j^{(k)}$ the $k$-th derivative, then there is a constant $C$ such
  that for $k=0,1,2$, for all $\delta \in (0,1]$ and
  $\nu$ such that $\pm \nu \in [ C_1 \delta, C_2 \delta]$
  \begin{equation}
    \begin{aligned}
      | g_0^{(k)}(T \nu) | \le {}& C T^{-N_1+k} \delta^{-N_1}
      && 
      \\
      | g_j^{(k)}(T \nu) | \le {}& C T^{-N_1+k} |j|^{-N_1}
      &&
      \text{if $j \neq 0$} .
    \end{aligned}
  \end{equation}
\end{lemma}
    
\begin{proof}
This follows straightforwardly from (\ref{eq:decay_hat_chi}) by differentiation.
\end{proof}

\begin{proof}[Proof of Theorem~\ref{th:speed_of_convergence}]
By rescaling problem quantities, we may assume that $\Delta
t=1$. In terms of original problem quantities this amounts to 
a change of variables
\begin{equation}
  \nu = \Delta t \tau .
\end{equation}
We will also write
\begin{equation}
  \nu_1 = \Delta t \Omega(\lambda) , \qquad 
  \nu_2 = \Delta t \omega .
\end{equation}
In this notation, case (I) is the case that $0 \le \nu_1 <
\text{constant} < \nu_2 $, and case (II) is the case that
$0 < \text{constant} < \nu_1 < \text{constant} < \pi$.
For brevity we will also omit the subscript from
$\widehat{\phi}_{\rm acd}$. It is given by  
\begin{equation} \label{eq:widehat_phi}
  \widehat{\phi}(\nu) =
  \left\{
    \begin{array}{ll}
      \displaystyle\frac{i}{2 \alpha \sin(\nu_1)} \left[ \widehat{U}(\nu +\nu_1) -
      \widehat{U}(\nu -\nu_1) \right]
      &
        \text{if $k(\lambda)\neq 0$,}
      \\
      \displaystyle\frac{i}{\alpha} \widehat{U}'(\nu)
      & \text{if $k(\lambda) = 0$.}
    \end{array}
  \right.
\end{equation}
The quantity $s_T^{\rm acd}(\lambda)$ satisfies, where we omit the superscript,
\begin{equation}
  s_T(\lambda) = \int_{\TT_{2\pi}} \psi_{1/T}(\nu_2 - \nu) \widehat{\phi}(\nu) \, d\nu .
\end{equation}

We first consider case (I).
To be shown is that
\begin{equation} \label{eq:s_TOexpressionI}
  s_T(\lambda) = \widehat{\phi}(\nu_2) + O(T^{-N^{\brackI}}) , \qquad T
  \to \infty.
\end{equation}
In this case, to handle the factor
$\frac{1}{\nu_1}$ around $\nu_1=0$, we write $\widehat{\phi}(\nu)$ as
\begin{equation} \label{eq:phi_integral_phiprime}
  \widehat{\phi}(\nu) =
  \frac{i \nu_1}{2\alpha \sin(\nu_1)}
  \int_{-1}^1 \widehat{U}'(\nu+s \nu_1) \, ds ,
\end{equation}
which is valid for $k(\lambda)=0$ and for $k(\lambda) \neq 0$.
The integral over $s$ can be substituted on
both sides of the equality sign in (\ref{eq:s_TOexpressionI}) and it
is sufficient to obtain an estimate for the integrand. Let
$\tilde{\nu} = s \nu_1$, $s \in[-1,1]$. It is
sufficient to show that
\begin{equation} \label{eq:estimate_using_widehatUprime}
  \int_{\TT_{2\pi}} \psi_{1/T}(\nu_2 - \nu) \widehat{U}'(\nu+\tilde{\nu}) \,
  d\nu
  =
  \widehat{U}'(\nu_2+\tilde{\nu})  + O(T^{-N^{\brackI}}) , \qquad T  \to \infty 
\end{equation}
uniformly in $\tilde{\nu}$.  We insert the expression for
$\widehat{U}$ from Proposition~\ref{prop:FT_unit_step}, and separately
consider the contributions from the delta function and the cotangent
function. Let
\begin{equation}
  \begin{aligned}
    I_1 = \int_{\TT_{2\pi}} \psi_{1/T}(\nu_2 - \nu) \delta'(\nu+\tilde{\nu}) \, d\nu
    \qquad \text{and} \qquad
    I_2 = \int_{\TT_{2\pi}} \psi_{1/T}(\nu_2 - \nu)
    \cot'(\frac{1}{2}(\nu+\tilde{\nu})) \, d\nu .
  \end{aligned}
\end{equation}
then the left hand side of integral
(\ref{eq:estimate_using_widehatUprime}) equals $\pi I_1 -
\frac{i}{2} I_2$,
and it is sufficient to show that
\begin{equation} \label{eq:estimate_I1_I2}
  I_1 = O(T^{-N^{\brackI}})
  \qquad \text{and} \qquad
  I_2 = \cot'(\frac{1}{2}(\nu+\tilde{\nu})) + O(T^{-N^{\brackI}}) ,
\end{equation}
$T \to \infty$.

Using the rules for differentiation and integration of delta
functions and Lemma~\ref{lem:psi_hatchi}, $I_1$ can be written as
\begin{equation}
  I_1 = - \psi_{1/T}'(\nu_2 + \tilde{\nu}) 
= - \frac{\tilde{T}}{2\pi} \sum_{j \in \ZZ}
  \widehat{\chi}(\tilde{T} (\nu_2 + \tilde{\nu}+j 2\pi)) .
\end{equation}
The first part of (\ref{eq:estimate_I1_I2}) now follows from
Lemma~\ref{lem:estimate_chi_Tnu}.

Using Lemma~\ref{lem:psi_hatchi} and a periodic cutoff function
as in the proof of Theorem~\ref{th:convergence_acd}, the integral
$I_2$ is rewritten as
\begin{equation}
  I_2 =  \frac{\tilde{T}}{2\pi} \sum_{j \in \ZZ}
  \int_\RR \widehat{\chi}(\tilde{T} (\nu_2 - \nu+j 2\pi))
  \theta_{\rm per}(\nu_2 - \nu ) \cot'(\nu+\tilde{\nu}) \, d\nu 
\end{equation}
We split the integration domain in three
regions for $j \neq 0$ and in four regions for $j=0$.
In both cases, region 2 is of width $\sim 1/T$ around $\tilde{\nu}$, the
associated cutoff function $\theta_2$ is equal to 1 one a region of
this width, and vanishes outside a large region of this width.
Regions 1 and 3 are below and above region 2 respectively.
In case of $j=0$, an region of width $\sim 1$ around $\nu = \nu_2$,
staying $\sim 1$ away from $-\tilde{\nu}$ is split off from region 3, with a
cutoff function $\theta_4$. Will absorb $\theta_{\rm per}$ in these
cutoff functions.
For region and cutoff function $k$ and summation index $j$ there are then the
contributions
\begin{equation}
  I_{2,j,k} = 
  \frac{\tilde{T}}{2\pi} \int_\RR \widehat{\chi}(\tilde{T} (\nu_2 - \nu))
  \theta_k(\nu +j 2\pi)
  \cot'(\frac{1}{2} (\nu+\tilde{\nu})) \, d\nu .
\end{equation}
and $I_2 = I_{2,0,4} + \sum_{j \in \ZZ} \sum_{k=1}^3 I_{2,j,k}$.
We claim that for regions $k=1,2,3$
\begin{equation} \label{eq:error_caseI}
  I_{2,j,k} = O(T^{-N_1+2}  (1+|j|)^{-N_1}) .
\end{equation}
For region 2 we observe that the action of $\cot'$ on a test function
supported on $[-w,w]$ can be bounded by 
\begin{equation}
  | \langle \cot', \phi \rangle |
  = | \langle \cot, \phi' \rangle |
  \le C |w| \, \| \phi'' \|_{L^\infty([-w,w])} 
\end{equation}
The estimate (\ref{eq:error_caseI}) follows from
this fact and the assumptions on $\widehat{\chi}$.
For region 3 there is a contribution of the form
\begin{equation}
  I_{2,j,3} = \frac{\tilde{T}}{2\pi}
  \int \cot'(\frac{1}{2}(\nu+\tilde{\nu}))
  \widehat{\chi}(\tilde{T} (\nu_2-\nu + 2\pi j)) \theta_3(\nu) \,  d
  \nu .
\end{equation}
where the integral stays $\sim 1/T$ away from the singularity of $\cot'$
and, for $j=0$ it stays $\sim 1$ away from $\nu = \nu_2$ where
$\widehat{\chi}$ is large. By these properties and Lemma~\ref{lem:estimate_chi_Tnu}
equation (\ref{eq:error_caseI}) is true for region 3. In a
similar way it is true for region 1. 

We next consider the contribution for case (I) around $\nu = \nu_2$. 
To show this we need assumption (\ref{eq:assumption_N2}) and 
extend the reasoning around (\ref{eq:expand_phi_around_omega})
to higher order. Let $g(\nu) =\cot'(\frac{1}{2}(\nu+\tilde{\nu}))
  \theta_4(\nu)$. This function can be expanded
\begin{equation}\label{eq:estimate_expand_order_N2}
  g(\nu)
  =
  g(\nu_2)
  + g'(\nu_2) (\nu-\nu_2)
  + \ldots 
  + \frac{1}{(N_2-1)!}g^{(N_2-1)}(\nu_2) (\nu-\nu_2)^{N_2-1}
  + h_{N_2}(\nu) (\nu-\nu_2)^{N_2} ,
\end{equation}
where $h_{N_2}$ is bounded by a constant times the
$N_2$-th derivative of $g$.
Then the relevant integral is transformed
\begin{equation} \label{eq:estimate_transform_order_N2}
  \begin{aligned}
  {}& \frac{\tilde{T}}{2\pi} \int g(\nu) \widehat{\chi}(\tilde{T} (\nu_2 - \nu)) \, d\nu
= 
  \frac{g(\nu_2)}{2\pi}
  \int  \widehat{\chi}(x) \, dx
  + \frac{g'(\nu_2)}{2\pi \tilde{T}}
  \int \widehat{\chi}(x) x \, dx
\\
  {}& + \ldots
  + \frac{g^{(N_2-1)}(\nu_2)}{2\pi \tilde{T}^{N_2-1} (N_2-1)!}
  \int \widehat{\chi}(x) x^{N_2-1} \, dx
+ \frac{1}{2\pi \tilde{T}^{N_2}}
  \int \widehat{\chi}(x) x^{N_2} h_{N_2}(x/\tilde{T}) \, dx .
\end{aligned}
\end{equation}
with $h_{N_2}$ bounded by a bound for  
$\frac{d^{N_2} g}{d\nu^{N_2}}(\nu)$.
By assumption (\ref{eq:assumption_N2}) we find that
\begin{equation}
  I_{2,0,4} = g(\nu_2) + O(T^{-N_2})
  = \cot'(\frac{1}{2} (\nu_2 + \tilde{\nu})) + O(T^{-N_2}) .
\end{equation}
This concludes the proof of (\ref{eq:estimate_caseI}).

We next consider case (II). In this case we assume $T \gtrsim
1 / |\lambda|$. It is sufficient to show that
\begin{equation} 
  \int_{\TT_{2\pi}} \psi_{1/T}(\nu_2 - \nu) \widehat{U}(\nu \pm \nu_1) \,
  d\nu
  =
  \widehat{U}(\nu_2 \pm \nu_1)  + O(T^{-N^{\brackII}} |\lambda|^{-N^{\brackII-1}}) ,
    \qquad T  \to \infty .
\end{equation}
We note that $|\nu_2 - \nu_1| \sim |\lambda|$, while
$\nu_2 + \nu_1 \sim 1$. For the $+$ sign, in fact $O(T^{-N^{\brackII}})$
can be obtained. We will treat the $-$ sign, the $+$ sign can be done
in the same way with minor modifications.
Using the expression for $\widehat{U}$ in
Proposition~\ref{prop:FT_unit_step} the integral on the left can be
written as a sum $\pi I_3 - \frac{i}{2} I_4$ where $I_3$ and $I_4$ are
defined by
\begin{equation}
  \begin{aligned}
    I_3 = \int_{\TT_{2\pi}} \psi_{1/T}(\nu_2 - \nu) \delta(\nu-\nu_1) \, d\nu
    \qquad \text{and} \qquad
    I_4 = \int_{\TT_{2\pi}} \psi_{1/T}(\nu_2 - \nu)
    \cot(\frac{1}{2}(\nu-\nu_1)) \, d\nu .
  \end{aligned}
\end{equation}
and it is sufficient to show that
\begin{equation} \label{eq:estimate_I3_I4}
  I_3 = O(T^{-N^{\brackII}} |\lambda|^{-N^{\brackII} - 1})
  \qquad \text{and} \qquad
  I_4 = \cot(\frac{1}{2}(\nu-\nu_1)) + O(T^{-N^{\brackII}}  |\lambda|^{-N^{\brackII} - 1}) ,
\end{equation}
$T \to \infty$.

The integral $I_3$ equals
\begin{equation}
  I_3 = \psi_{1/T}(\nu_2 - \nu_1) 
  = \frac{\tilde{T}}{2\pi} \sum_{j \in \ZZ}
  \widehat{\chi}(\tilde{T} (\nu_2 -\nu_1) ) .
\end{equation}
By Lemma~\ref{lem:estimate_chi_Tnu} the first part of (\ref{eq:estimate_I3_I4})
follows.

The integral $I_4$ can be written as
\begin{equation}
  I_4 =  \frac{\tilde{T}}{2\pi} \sum_{j \in \ZZ}
  \int_\RR \widehat{\chi}(\tilde{T} (\nu_2 - \nu+j 2\pi))
  \theta_{\rm per}(\nu_2 - \nu ) \cot(\nu-\nu_1) \, d\nu 
\end{equation}
We split the integration domain in three
regions for $j \neq 0$ and in four regions for $j=0$.
In both cases, region 2 is of width $\sim 1/T$ around $\tilde{\nu}$, the
associated cutoff function $\theta_2$ is equal to 1 one a region of
this width, and vanishes outside a large region of this width.
Regions 1 and 3 are below and above region 2 respectively.
In case of $j=0$, a region of with $\sim |\lambda|$ around $\nu = \nu_2$,
staying $\sim |\lambda|$ away from $\nu_1$ is split off from region 1
or region 3, with a
cutoff function $\theta_4$. Will absorb $\theta_{\rm per}$ in these
cutoff functions.
For region and cutoff function $k$ and summation index $j$ there are then the
contributions
\begin{equation}
  I_{4,j,k} = 
  \frac{\tilde{T}}{2\pi} \int_\RR \widehat{\chi}(\tilde{T} (\nu_2 - \nu))
  \theta_k(\nu +j 2\pi)
  \cot(\frac{1}{2} (\nu - \nu_1)) \, d\nu .
\end{equation}
and $I_4 = I_{4,0,4} + \sum_{j \in \ZZ} \sum_{k=1}^3 I_{4,j,k}$.
We claim that for regions $k=1,2,3$
\begin{equation} \label{eq:error_caseII}
  I_{4,0,k} = O(T^{-N_1+1} |\lambda|^{-N_1})
  \qquad \text{and} \qquad
  I_{4,j,k} = O( T^{-N_1+1} |j|^{-N_1} ) ,\qquad \text{if $j \neq 0$}.
\end{equation}
For region 2 we observe that the action of $\cot$ on a test function
supported on $[-w,w]$ can be bounded by 
\begin{equation}
  | \langle \cot, \phi \rangle |
  \le C |w| \, \| \phi' \|_{L^\infty([-w,w])} 
\end{equation}
The estimate (\ref{eq:error_caseII}) follows from
this fact and the assumptions on $\widehat{\chi}$.
For region 3 there is a contribution of the form
\begin{equation}
  I_{2,j,3} = \frac{\tilde{T}}{2\pi}
  \int \cot(\frac{1}{2}(\nu-\nu_1))
  \widehat{\chi}(\tilde{T} (\nu_2-\nu + 2\pi j)) \theta_3(\nu) \,  d
  \nu .
\end{equation}
The support of the integrand stays $\sim 1/T$ away from the singularity
of the cotangent function and, in case $j = 0$ it stays $\sim |\lambda|$
away from the point $\nu = \nu_2$ which is where $\widehat{\chi}$ is
large. This implies  (\ref{eq:error_caseII}) for region 3. Region 1 is
treated similarly.
The treatment in equations
(\ref{eq:estimate_expand_order_N2}) and
(\ref{eq:estimate_transform_order_N2})
is used with minor modifications to show that
\begin{equation}
  I_{4,0,k} = \cot(\frac{1}{2}(\nu_2 - \nu_1)) + O(T^{-N_2}
  |\lambda|^{-N_2-1}) .
\end{equation}
This concludes the proof of the theorem.
\end{proof}

\subsection{Discussion}
\label{subsec:discuss_convergence_speed_result}

We briefly discuss 
Theorem~\ref{th:speed_of_convergence} and its assumptions.

By elementary properties of Fourier transforms
assumption (\ref{eq:decay_hat_chi}) is directly related to the
smoothness of $\chi$. For example, for a piecewise $C^\infty$
function, with finite jumps in the $k$-th derivatives, such an estimate
holds with $N_1 = k+1$.
The $k$-th moment of the Fourier transform $\widehat{\chi}$ is
proportional to $\chi^{(k)}(0)$. The second requirement
of (\ref{eq:assumption_N2}) can hence be satisfied by assuming
$\chi =1 $ on neighborhood of 0.
The first part of (\ref{eq:assumption_N2}) is again related to the
smoothness of $\chi$.

It is clear that the inversion problem becomes more difficult if the
smallest absolute eigenvalue of $H$ is closer to zero. Define the
notion of ``gap'' by
\begin{equation}
  \operatorname{gap} = \min \{ |\lambda_1|, \ldots, |\lambda_N| \} .
\end{equation}

When the matrix $S_T^{\rm acd}$ is applied as a preconditioner, the
convergence factor is of interest. In the context of
subsection~\ref{subsec:speed_of_convergence} this is given by
\begin{equation}
  \rho_{\rm c} = \max  \{ s_T^{\rm acd} \lambda  -1  \, ; \, j =1\ldots,N \}
\end{equation}
The theorem leads to the following result. Suppose one wants to obtain
a convergence factor $\rho_c < R_c$, with $R_c$ some fixed constant
$<1$. There is a constant $C$ such that one can set
\begin{equation}
  T = C / \operatorname{gap}
\end{equation}
to obtain this.
With this choice a preconditioned iterative method will converge linearly,
and the {\em total number of time steps simulated is proportional to
  $1/\operatorname{gap}$}. For the error as a function of the number
of time steps simulated one obtains a bound
\begin{equation} \label{eq:linear_convergence}
  \text{error} < C e^{ - (\text{\#timesteps})  \cdot  \operatorname{gap} }
\end{equation} 

For the WaveHoltz method of
\cite{appelo2019waveholtz} a convergence rate of
$1 - O(\operatorname{gap}^2)$ was established, see Theorem 2.3 of
\cite{appelo2019waveholtz}.  This leads to a bound
for the errors
\begin{equation}
  \text{error} < C e^{ - (\text{\#timesteps}) \cdot
    \operatorname{gap}^2 } ,
\end{equation}
which is clearly less attractive than the bound (\ref{eq:linear_convergence}).

\section{Numerical examples}
\label{sec:numerical_examples}

In this section we study some examples. The examples all involve an
optimized finite-difference discretization described in
subsection~\ref{subsec:optimized-fd} in two and three dimensions.

To study the 2-D method, a simple implementation in Julia was made.
In the 3-D case the time stepping was done in C, the computation of coefficients and
the GMRES iteration were done in Julia. In the C implementation, for
each grid point and timestep, 14 real coefficients had to be read, of
which one also had to be written back to. The C implementation used AVX
extensions but was otherwise straightforward. In most computations
double precision numbers were used. The exception was the
time-stepping performed in C. We found that the time-domain preconditioner
could also be run in single precision, since the GMRES solver would
automatically take care of rounding errors in subsequent iterations.

\subsection{2-D examples}
\label{subsec:numerical_examples_2D}

We considered three 2-D examples:
A constant velocity model defined on
the unit square referred to as vel1 and two
piecewise constant models also defined on the unit square. The first of the
two piecewise constant models allows for resonances in the circular
region.  On the exterior of the model damping layers of thickness 32
gridpoints were added to simulate an unbounded domain.
The simulations in these examples
were done using a minimum of 6 points per wavelength. In each case
this resulted in 8 timesteps per period.
See
figure~\ref{fig:model_and_sols2D} for non-constant velocity models and
some solutions in these models. 

The first objective was to the study the convergence of the
approximate solution $S_T^{\rm acd} F$ to $H^{-1} F$. For this we took models of size
$320\times320$ and $640 \times 640$ (excluding damping layers), and
let the right-hand side be a point source.
\ReviseBlue{For the constant model the
  parameter $k = \frac{\omega_{\rm ph}}{c}$ equalled $k=335.1$ for
  size $320\times 320$ and $k=670.1$ for size $640 \times 640$.} 
The ``taper'' parameter $\rho$ was varied, we considered values in
$\{ 0, 0.25, 0.5, 0.75 \}$. The value $0$ was not exactly zero, in
this case the time-harmonic right-hand side grew from 0 to 1 over one
period. 
The convergence (relative difference between approximate and true solutions)
is given in Figure~\ref{fig:large_time_convergence}. Note the
differing axes in the Figure~\ref{fig:large_time_convergence}(b).
The main conclusions from these figures are that the convergence in
the resonant model is relatively poor, and that the choice of $\rho = 0$ (no
tapering) leads to poor convergence. The detailed amount of tapering
is less important. The lower value ($\rho = 0.25$) appears to
perform best unless extremely small relative errors are required.
\newcommand{\addsubfigurelabel}[3]{%
  \begin{minipage}{#2}
    \begin{center}
      #1\\
      #3
    \end{center}
  \end{minipage}}

\begin{figure} 
  \begin{center}
    \addsubfigurelabel{(a)}{59mm}{%
      \includegraphics[width=59mm]{\figdir 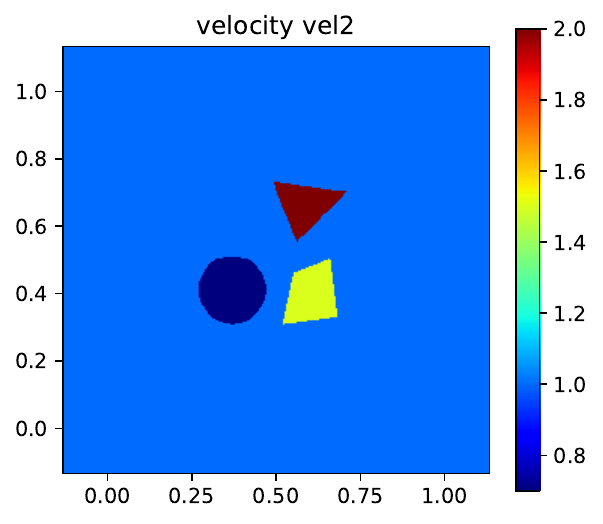}}
    \addsubfigurelabel{(b)}{59mm}{
     \includegraphics[width=59mm]{\figdir 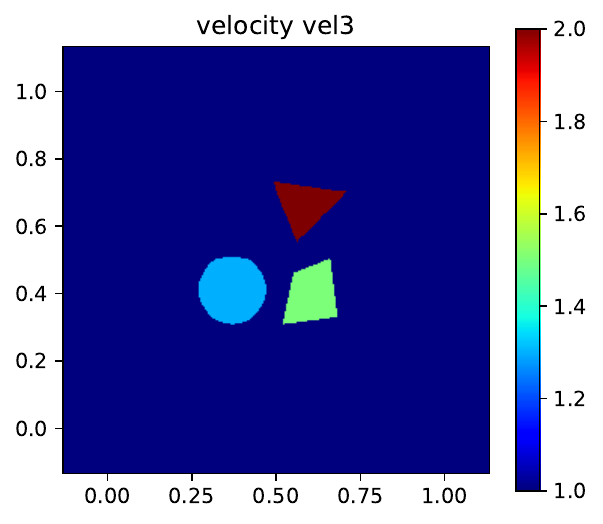}}
    \addsubfigurelabel{(c)}{59mm}{%
      \includegraphics[width=59mm]{\figdir 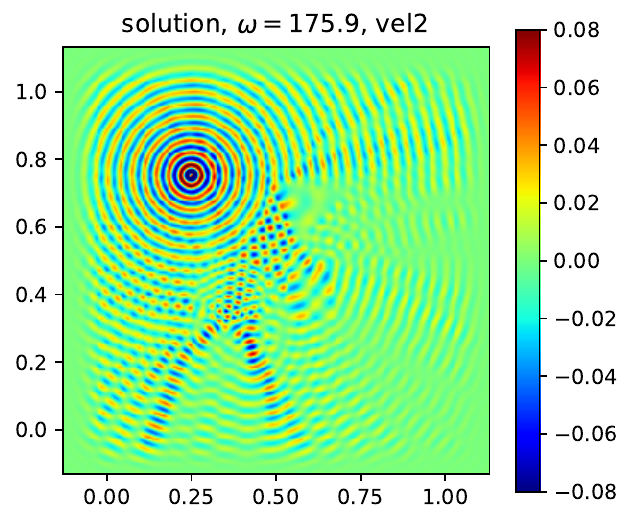}}
    \addsubfigurelabel{(d)}{59mm}{
     \includegraphics[width=59mm]{\figdir 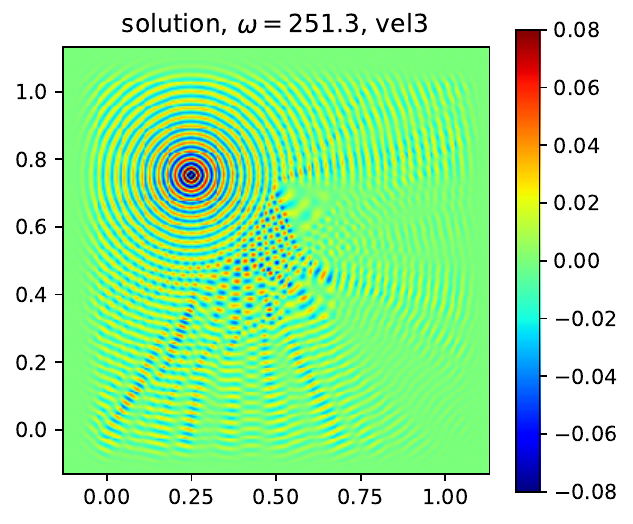}}
  \end{center}
  \caption{Two velocity models (a,b) and solutions in these models
    (c,d).
  A relatively small example of $240 \times 240$ was used so that the
  wavefield oscillations are still visible.
}\label{fig:model_and_sols2D}
\end{figure}

\begin{figure} 
  \begin{center}
    \addsubfigurelabel{(a)}{63mm}{%
    \includegraphics[width=63mm]{\figdir 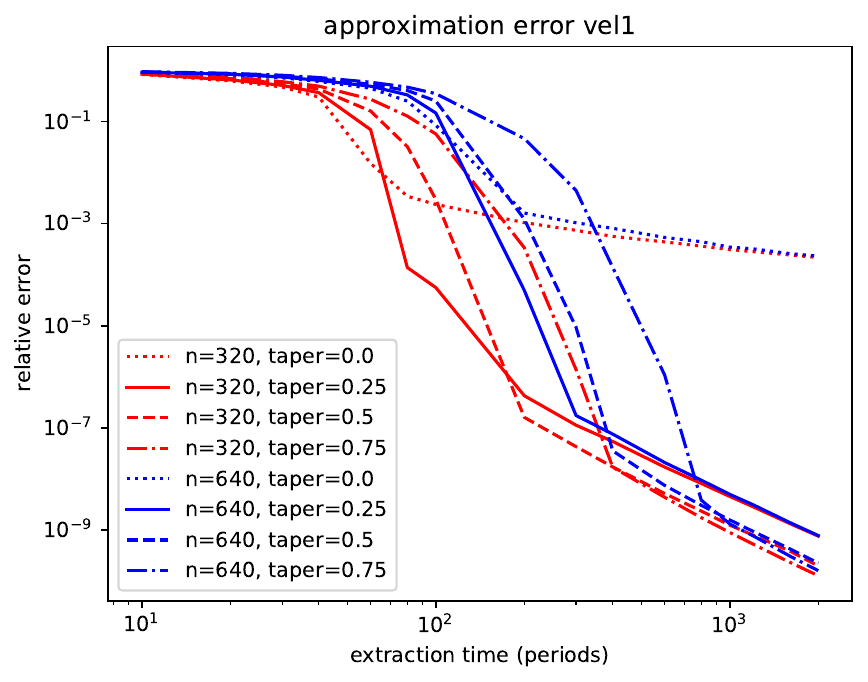}} \hspace*{5mm}
    \addsubfigurelabel{(b)}{63mm}{%
    \includegraphics[width=63mm]{\figdir 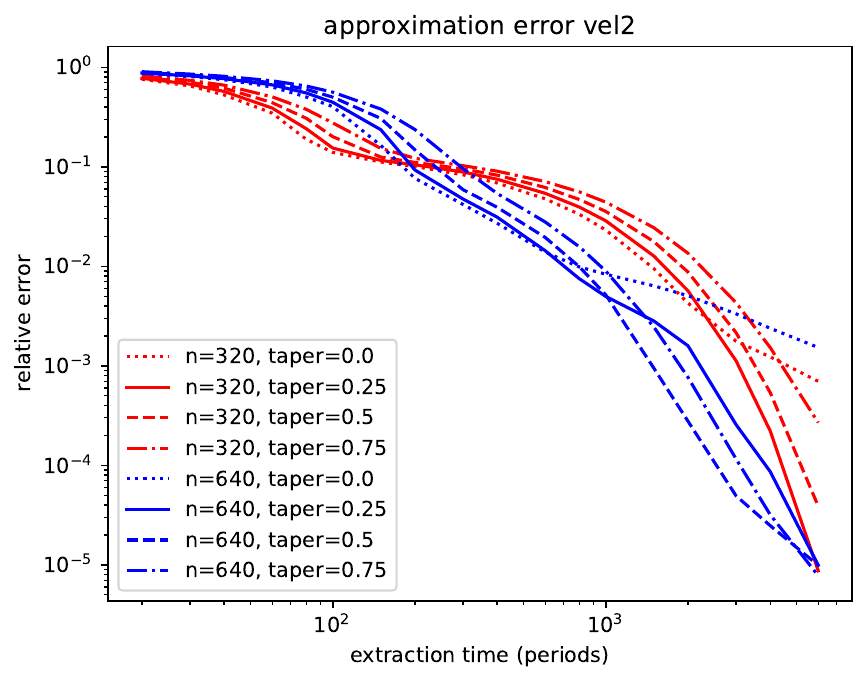}}\\
    \addsubfigurelabel{(c)}{63mm}{%
    \includegraphics[width=63mm]{\figdir 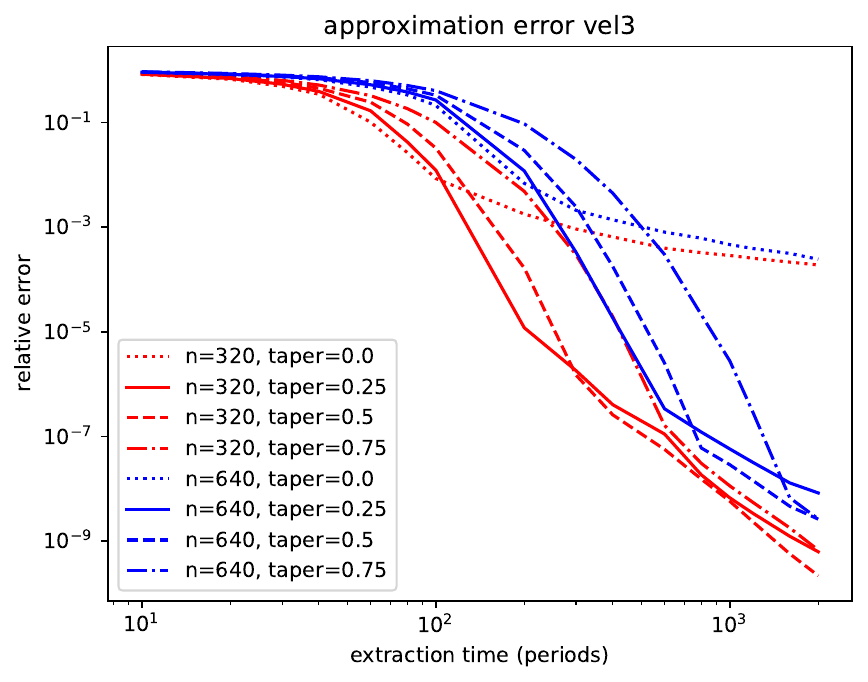}}\\
  \end{center}
  \caption{Convergence behavior of large time approximate
    solutions. Note the different scales in figure (b).}
  \label{fig:large_time_convergence}
\end{figure}

GMRES convergence for the three velocity models of size 640 gridpoints
(excluding damping layers) are given in
Figure~\ref{fig:GMRES_convergence}. In each case the number of periods
of the preconditioner was varied.
In Figures~\ref{fig:GMRES_convergence}(a-c) the taper parameter
was 0.25
and both the error indicator from GMRES and the true error are
plotted. In Figure~\ref{fig:GMRES_convergence}(d) the GMRES error was
plotted and the taper parameter was varied.
On the x axis, the number of iterations times the number of periods per
iteration was displayed. This is roughly, but not quite proportional
to the cost, the main difference being that one more preconditioner
application is needed for the right-hand side in the preconditioned
system.
In velocity models 1 and 3, there was no speedup compared to the
direct application of the time-domain solver. Neither was the method
much slower. However, in the resonant velocity model (velocity model
2), the performance using GMRES was substantially better than when the
time-domain solver was used directly.

In the examples about 50 to 100 periods for the preconditioner worked
best. This is approximately the size of the example in wavelenghts. 
\begin{figure} 
  \begin{center}
    \addsubfigurelabel{(a)}{63mm}{%
    \includegraphics[width=63mm]{\figdir 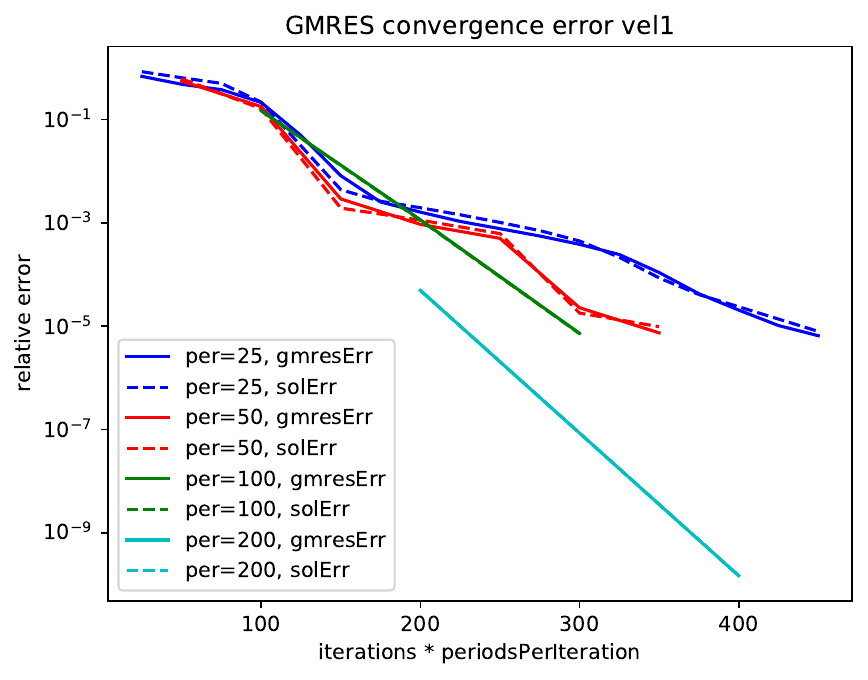}} \hspace*{5mm}
    \addsubfigurelabel{(b)}{63mm}{%
    \includegraphics[width=63mm]{\figdir 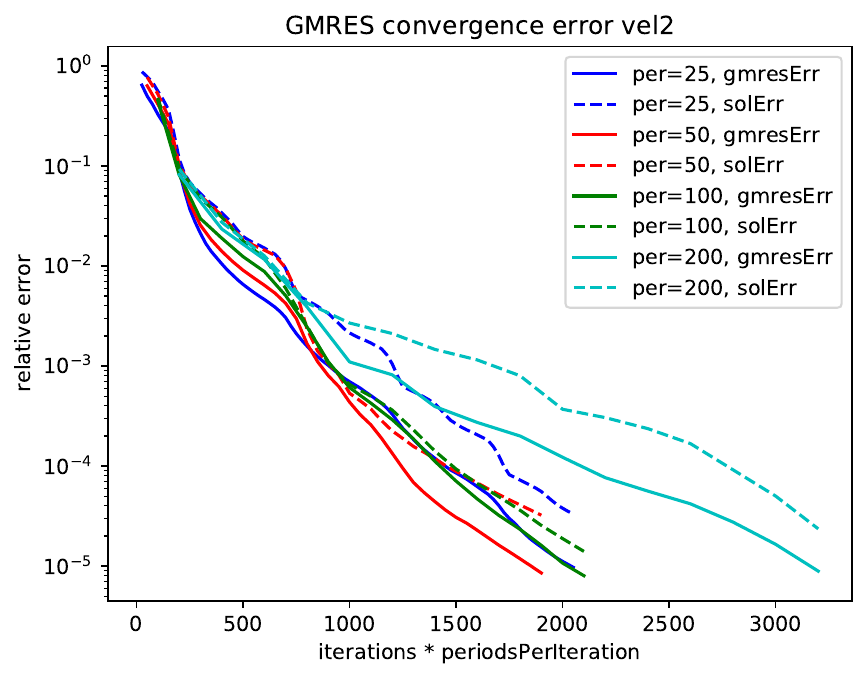}}\\
    \addsubfigurelabel{(c)}{63mm}{%
    \includegraphics[width=63mm]{\figdir 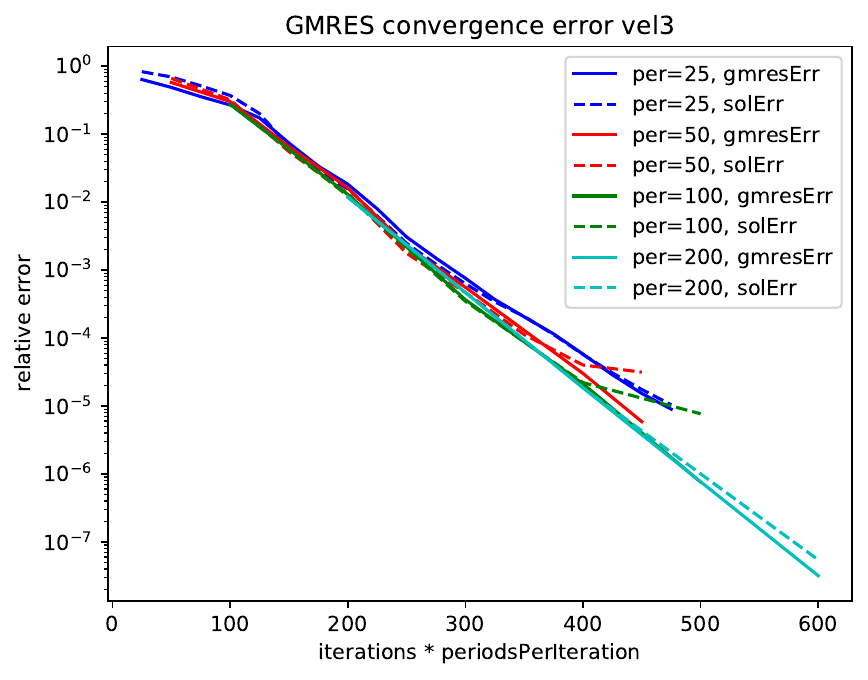}} \hspace*{5mm}
    \addsubfigurelabel{(d)}{63mm}{%
    \includegraphics[width=63mm]{\figdir 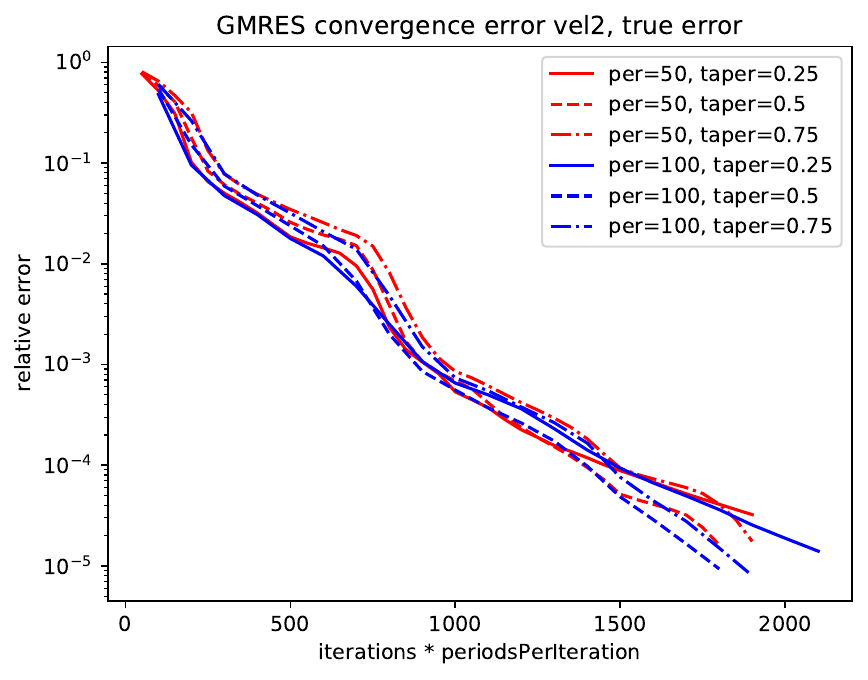}}
  \end{center}
\caption{Preconditioned GMRES convergence for different models and different
  parameter choices} \label{fig:GMRES_convergence}
\end{figure}

It is confirmed here that, in case of resonances, long time simulation
(i.e.\ applying the limiting amplitude principle directly) leads to
poor convergence.  For our time-domain
preconditioner, a substantial speedup can be obtained by using Krylov
accelleration and a smooth window function. Exact controllability is
another way of obtaining a speedup, cf. \cite[Fig.\
7]{grote2020parallel}. How these two methods compare is difficult to
say based on what is published.  Poor convergence in case of
resonances has also been observed in examples using WaveHoltz
\cite[Fig 4.12(b)]{appelo2019waveholtz} and other iterative methods.

\subsection{Examples in 3-D}
\label{subsec:numerical_examples_3D}
We also studied the performance of the algorithm in three
dimensions. In this case, the velocity model was the SEG/EAGE Salt
Model%
\ReviseBlue{\footnote{%
    \ReviseBlue{see \url{https://wiki.seg.org/wiki/SEG/EAGE_Salt_and_Overthrust_Models}}}%
  }.
In this case our goal was to obtain a first estimate of the
computational cost of the method. A minimum of 6 gridpoints per
wavelength was used. The Julia/C code was run on a 2019 MacBook Pro with
a 2.6 GHz 6-core Intel i7 processor and 16 GB main memory. A GMRES
error reduction with a factor  $10^{-5}$ was required.
The results are in Table~\ref{tab:seg_eage_salt_example}.

\begin{table} \label{tab:seg_eage_salt_example}
\begin{center}
  \begin{tabular}{| l | c | c | c |}
    \hline
    physical frequency & 2.5 Hz & 4 Hz & 6 Hz\\
    problem size   & $200 \times 200 \times 106$
                                & $280 \times 280 \times 130$
    & $392 \times 392 \times 165$\\
    degrees of freedom         & 4.24 e6 & 1.02 e7 & 2.54 e7\\
    time steps/period   & 8 & 8 & 8 \\
    periods/iteration     & 25 & 40 & 60 \\
    taper parameter          & 0.25 & 0.25 & 0.25\\
    iterations                 & 6 & 6 & 6 \\ 
    computation time & 45 s & 166 s & 601 s\\ \hline
  \end{tabular}
\end{center}
\caption{Computational results for the SEG/EAGE Salt model}
\end{table}

\subsection{Comparison with diagonally preconditioned GMRES}
\label{subsec:diagonally_preconditioned_gmres}

\begin{table} \label{tab:diag_prec_gmres}
\begin{center}
  \begin{tabular}{| l | c || c | c || c | c | }\hline
    \multicolumn{2}{|c}{velocity model}
    & \multicolumn{2}{|c}{diagonal precon}
    & \multicolumn{2}{|c|}{time-domain precon} \\ \hline
    name & size
    & \#iterations (time) & solerr & \#timesteps (time) & solerr \\ \hline
    vel1 & $320 \times 320$
    & 938 (8.8 s) & 5.1e-5 & 2000 (6.3 s) & 3.0e-6 \\ 
    vel1 & $640 \times 640$    
    & 1761 (61 s) & 7.1e-5 & 3200 (38 s) & 9.7e-6 \\ \hline
    vel2 & $320 \times 320$
    & 20503 (204 s) & 9.7e-4 & 9600 (33 s) & 1.9e-5 \\
    vel2 & $640 \times 640$    
    & 56296 (2066 s) & 1.8e-4 & 15600 (201 s) & 3.1e-5 \\ \hline 
    vel3 & $320 \times 320$
    & 1613 (19 s) & 3.3e-5 & 2400 (7.5 s) & 4.0e-5 \\
    vel3 & $640 \times 640$    
    & 3197 (110 s) & 4.8e-5 & 3600 (51 s) & 3.8e-5 \\ \hline
  \end{tabular}
\caption{Comparison of time-domain preconditioner with a diagonal preconditioner}
\end{center}
\end{table}
In Table~\ref{tab:diag_prec_gmres} a comparison between the
time-domain preconditioner and diagonally preconditioned GMRES is
made. In both cases the number of matrix applications, the computation
time and the error compared to the true solution is displayed.
The computations were done in Julia.

Diagonally preconditioned GMRES is consistently slower. This is
because per matrix application, the diagonally preconditioned GMRES
method involves more work, since the vectors spanning the Krylov
subspace must be manipulated. 
In specific scenarios there are more advantages of the time-domain
preconditioner. When GPUs are used it is an
advantage that the Krylov subspace is manipulated infrequently, which
means that the Krylov subspace doesn't have to be stored in the scarce device
memory. In inner-outer iterative methods the time-domain
preconditioner could be used as an inner method, with the advantage
that it is linear.


\section{Explicit scheme for non-diagonal damping}
\label{sec:generalize}


If the matrix $B$ in
(\ref{eq:central_differences_damped_leapfrog_verlet}) is non-diagonal,
the associated time-stepping method becomes implicit, which is not
practical for a hyperbolic time-dependent system. Here we introduce
a time-integrator that is explicit even in the case of
non-diagonal damping by following 
similar steps as in section~\ref{sec:method}. Possible
applications are time-harmonic PDE's discretized using standard finite
elements and finite difference discretizations with PML boundary layers.
The associated time integrator has stricter CFL conditions and is
therefore only proposed for the case
that $B$ is non-diagonal.

\subsection{Definition of the method}
\label{subsec:definition_bd}

The starting point of our {\em frequency adapted backward differences
  damped leapfrog} scheme is a straightforward modification of
(\ref{eq:central_differences_damped_leapfrog_verlet}) in which
backward differences are used to discretize the damping term
\begin{equation} \label{eq:backward_differences_damped_leapfrog_verlet}
  \frac{1}{\Delta t^2} \left( u_{n+1} - 2 u_n + u_{n-1}
  \right)
  + \frac{B}{\Delta t} \left( u_n - u_{n-1} \right) 
  + A u_n = f_n .
\end{equation}
The associated time integrator will be called backward differences
damped leapfrog.
\begin{definition} \label{def:Ibd}
  Let $K, L$ and $g_n$ be as in (\ref{eq:K_L_central_differences}).
  {\em Backward differences damped leapfrog} will be defined as the time
  integrator given by
  \begin{equation} \label{eq:bd_integrator}
    u_{n+1}
    = I_{\rm bd}(u_n,u_{n-1},f_n) 
    := (2 - K - L) u_n - (I - L) u_{n-1} + g_n  .
  \end{equation}
\end{definition}

By choosing $K$, $L$ differently, the resulting time integrator can be
made exact for time-harmonic signals of frequency $\omega$. This is
shown in the following Proposition, which is an equivalent of  
Proposition~\ref{prop:reproduce_symbol_at_omega}. The associated time
integration method will be denoted by $I_{\rm abd}$.

\begin{proposition} \label{prop:reproduce_symbol_at_omega_bd}
  Let $u_n$ and $f_n$ be related to $U,F \in \CC^N$ by
  \begin{equation}
    u_n = e^{i \omega n \Delta t} U , \qquad
    f_n = e^{i \omega n \Delta t} F
  \end{equation}
  and let $\alpha$ and $\beta$ be as defined in (\ref{eq:define_alpha_beta}).
  Then $U,F$ satisfy (\ref{eq:time-harmonic_from_semi-discrete}) if
  and only if $u_n$, $f_n$ satisfy
  \begin{equation} \label{eq:mod_backward_differences_damped_leapfrog_verlet}
    \frac{1}{\Delta t^2} \left( u_{n+1} - 2 u_n + u_{n-1}
    \right)
    + \frac{\hat{B}}{\Delta t} \left( u_n - u_{n-1} \right) 
    + \hat{A} u_n = \alpha^{-1} f_n ,
  \end{equation}
  where
  \begin{equation} \label{eq:redefineABf_bd}
    \hat{A}  =  \alpha^{-1} A
    - \frac{\beta (1-\cos(\omega \Delta t))}{\alpha \Delta t} B ,
    \qquad \text{ and } \qquad
    \hat{B}   = \alpha^{-1} \beta B .
  \end{equation}
\end{proposition}

\begin{proof}
  To prove this claim,
  $\hat{A}$, $\hat{B}$ and $\hat{c}$
  will be constructed such that
  \begin{equation} \label{eq:recursion_hat}
  \frac{1}{\Delta t^2} \left( u_{n+1} - 2 u_n + u_{n-1} \right)
  + \frac{1}{\Delta t} \hat{B} \left( u_n - u_{n-1} \right) 
  + \hat{A} u_n = \hat{c} f_n ,
  \end{equation}
  if and only if (\ref{eq:time-harmonic_from_semi-discrete}). 
  Inserting $u_n = U e^{i n \omega \Delta t}$ into
  (\ref{eq:recursion_hat}), results in 
  \begin{equation}
    \left[ \frac{2 \cos(\omega \Delta t) - 2}{\Delta t^2}
      + \frac{i \, \sin( \Delta t \omega)}{\Delta t} \hat{B}
      + \frac{1 - \cos( \Delta t \omega)}{\Delta t} \hat{B}
      + \hat{A}
    \right] U e^{i n \omega \Delta t} = \hat{c} e^{i n \omega \Delta t} F .
  \end{equation}
  Using the definitions of $\alpha$ and $\beta$ and multiplying by
  $\alpha$ results in the equivalent equation
  \begin{equation}
    \left[ - \omega^2
      + i \omega \frac{\alpha}{\beta} \hat{B} 
      + \frac{\alpha(1 - \cos( \Delta t \omega))}{\Delta t} \hat{B}
      + \alpha \hat{A} \right] U = \alpha \hat{c} F .
  \end{equation}
  This is equivalent to (\ref{eq:time-harmonic_from_semi-discrete})
  if $\hat{c} = \alpha^{-1}$ and $\hat{A}$ and $\hat{B}$ are defined
  as in (\ref{eq:redefineABf_bd}).
\end{proof}

\begin{definition} \label{def:Iabd}
  Frequency adapted backward differences damped leapfrog will be
  defined as the time integrator given by
  \begin{equation} \label{eq:abd_integrator}
    u_{n+1}
    = I_{\rm abd}(u_n,u_{n-1},f_n) 
    := (2 - K - L) u_n - (I - L) u_{n-1} + g_n  .
  \end{equation}
  where $K$, $L$ and $g_n$ are given by
  \begin{equation} \label{eq:K_L_frequency_adapted_backward_differences}
    \begin{aligned}
      K  = {}& \frac{\Delta t^2}{\alpha} A
      - \frac{\Delta t \,
        \beta (1-\cos(\omega \Delta t))}{\alpha} B \qquad
      \\
      L = {}& \frac{\beta \Delta t}{\alpha} B ,
      \\
      g_n = {}& \frac{\Delta t^2}{\alpha} f_n
    \end{aligned}
  \end{equation}
\end{definition}

We proceed by discussing the choice of $A$, $B$, $\omega$ and $\Delta
t$. Obviously we still have (\ref{eq:A_B_from_H}). The requirements for
$\omega$ and $\Delta t$ should follow from stability conditions for
$I_{\rm abd}$, cf.\ subsection~\ref{subsec:choose_semidiscrete}. These
stability conditions are 
(\ref{eq:stability_cond_K_L}) and 
\begin{equation} \label{eq:stability_cond_bd_upper}
  \text{$4I - K - 2L$ is positive definite} ,
\end{equation}
as shown in subsection~\ref{subsec:analysis_bd} below. 

The method for choosing $\omega$ and $\Delta t$ is somewhat more
complicated than in subsection~\ref{subsec:choose_semidiscrete},
because the stability requirement leads to 
two conditions that both involve $\omega$ and
$\Delta t$.
It is convenient to use $\omega \Delta t$ and $\Delta t$
as parameters instead of $\omega$ and $\Delta t$. The parameter
$\omega \Delta t$ should be between 0 and $\pi$. Given
$\omega \Delta t$ the following expression for $\omega^2$ can be
derived from the condition that $K$ is positive semidefinite and
equation (\ref{eq:K_L_frequency_adapted_backward_differences})
\begin{equation}
  \omega^2 = - \lambda_{\rm min}(\Re H)
  + \frac{\beta}{\omega \Delta t} ( 1 - \cos(\omega \Delta t))
  \lambda_{\rm max}(\Im H) 
\end{equation}
(instead of $\lambda_{\rm min}(\Re H)$ and
$\lambda_{\rm max}(\Im H) $ lower and upper bounds can be used respectively).
From the condition that $4I - K - 2L$ is positive definite we then get
the following scalar condition
\begin{equation} \label{eq:Deltat_bd_full_inequality}
  \frac{ (\omega \Delta t)^2} {\alpha}
  \left( \frac{\lambda_{\rm max}(\Re H)}{\omega^2} + 1 \right)
  + \frac{\beta}{\alpha} (1 + \cos(\omega \Delta t))
  \frac{\lambda_{\rm max}(\Im H)}{ \omega^2} < 4 .
\end{equation}
The following is a stronger inequality than
(\ref{eq:Deltat_bd_full_inequality})
\begin{equation}
\left( \frac{\lambda_{\rm max}(\Re H)}{-\lambda_{\rm min}(\Re H)} + 1
\right) (\omega \Delta t)^2
+ 2 \frac{\lambda_{\rm max}(\Im H)}{-\lambda_{\rm min}(\Re H)}
\omega \Delta t < 4 .
\end{equation}
From here a value of $\omega \Delta t$ can be obtained that satisfies
the conditions by solving a simple quadratic equation.
If a larger value of $\omega \Delta t$ is desired, one can look numerically
for a value as large as possible for which
(\ref{eq:Deltat_bd_full_inequality})
is still satisfied.

The time-domain approximate solution operator is defined similarly as
in Definition~\ref{def:approx_solver}.
\begin{definition} \label{def:approx_solver_bd}
  Let $\chi$ be an admissible $C^\infty$ window function 
  and let $T$ be a positive real constant, such that
  $n_{\rm steps} := 2\pi \omega^{-1} T / \Delta t$ is an integer.
  For $F \in \CC^N$, let
  \begin{equation} 
    f_n = f(n \Delta t) , \qquad
    f(t) = \chi(1 - \frac{t}{2\pi \omega^{-1} T} ) e^{i \omega t} F .
  \end{equation}
  The {\em time-domain approximate solution operator for $H$} associated with the
  integrator $I_{\rm abd}$ is the linear map $S_T^{\rm abd} : \CC^N \to \CC^N$ defined by
  \begin{equation}
    S_T^{\rm abd} F = e^{- i 2\pi T} u_{n_{\rm steps} } , 
  \end{equation}
  where $u_n$, $n=0,1, \ldots, n_{\rm steps} $ is given by
  \begin{equation}
    u_{n+1} = I_{\rm abd} (u_n,u_{n-1}, f_n) , \qquad u_0 = 0 .
  \end{equation}
\end{definition}

\subsection{Analysis}
\label{subsec:analysis_bd}

To establish stability of backward differences damped leapfrog, one can study
the growth of solutions to the recursion
\begin{equation} \label{eq:homogeneous_recursion_bd}
  u_{n+1} + (- 2+K + L) u_n + (I - L) u_{n-1} = 0 
\end{equation}
using the energy function
\begin{equation}
  \begin{aligned}
    E_{\rm bd}(n-1/2) = {}& \langle u_n - u_{n-1} , (4I - K)  (u_n - u_{n-1}  ) \rangle
    + \langle u_n + u_{n-1} , K (u_n + u_{n-1}  ) \rangle
    \\
    {}& - 2 \langle u_n - u_{n-1}, L (u_n - u_{n-1}) \rangle .
  \end{aligned}
\end{equation}
The following conclusions can be drawn.
  \begin{enumerate}[(i)]
  \item
    If $K$ and $4I - K - 2L$ are positive definite, then $E_{\rm bd}$ is equivalent
    to a norm on $\RR^{2N}$. If $L = 0$ then $E_{\rm bd}$ is conserved and
    solutions remain bounded if $t \to \pm \infty$. If $L$ is positive
    semidefinite then $\Delta E_{\rm bd}(n) \le 0$
    and solutions remain bounded if $t \to \infty$.
  \item
    If instead $K$ is positive semidefinite with one or more zero eigenvalues
    and $4I - K - 2L$ is positive definite, then $E_{\rm bd}$ is not equivalent to a
    norm. If $L = 0$ then $E_{\rm bd}$ is
    conserved and solutions grow at most linearly if $t \to \pm \infty$,
    If $L$ is positive semidefinite, then $\Delta E_{\rm bd}(n) \le 0$ and
    solutions grow at most linearly if $t \to \infty$.
  \end{enumerate}

For $S_{\rm abd}$ results similar to Theorems \ref{th:convolution_formula_S_T_acd} and
\ref{th:convergence_acd} can be
obtained by following the same method of proof.
The equivalent of (\ref{eq:operator_Cacd}) is 
\begin{equation} \label{eq:operator_Cabd}
  C_{\rm abd}(u)(t) = \frac{\alpha}{\Delta t^2}
  \left[ u(t+\Delta t) + (- 2+K+L) u(t)
    + (I - L) u(t-\Delta t) \right] 
\end{equation}
where $K,L$ are as defined in (\ref{eq:K_L_frequency_adapted_backward_differences}).
A causal Green's function $\Phi_{\rm abd}$ is defined satisfying
\begin{equation} 
  C_{\rm abd} \Phi_{\rm abd} (t) = I \delta(t) ,
  \qquad \text{and} \qquad 
  \Phi_{\rm abd}(t) = 0 \text{ if } t \le  0 ,
\end{equation}
The following theorems are proved in the same ways
as theorems~\ref{th:convolution_formula_S_T_acd} and
\ref{th:convergence_acd}

\begin{theorem} \label{th:convolution_formula_S_T_abd}
Assume $\psi_\epsilon$ and $\Phi_{\rm abd}$ are as just defined, then 
\begin{equation} \label{eq:convolved_Fourier_transform_abd}
  S_{T}^{\rm abd} =  \psi_{1/T} \ast \widehat{\Phi}_{\rm abd}(\omega) .
\end{equation}
\end{theorem}

\begin{theorem} \label{th:convergence_abd}
If
  \begin{equation} \label{eq:H_non-singular_abd}
    \text{$H = -\omega^2 I + i \omega B + A$ is non-singular}
  \end{equation}
  and $A,B,\omega$ and $\Delta t$ are such that $K$, $L$ defined in
  (\ref{eq:K_L_frequency_adapted_backward_differences}) satisfy
  the stability conditions (\ref{eq:stability_cond_K_L}) and (\ref{eq:stability_cond_bd_upper})
  then
  \begin{equation}
    \lim_{T \to \infty} S_T^{\rm abd} = H^{-1} .
  \end{equation}
\end{theorem}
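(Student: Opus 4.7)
The plan is to mirror the argument used for $S_T^{\rm acd}$ in Theorem~\ref{th:convergence_acd}, adapting only the pieces that change when the central-difference damping is replaced by the backward-difference damping. First I would invoke Theorem~\ref{th:convolution_formula_S_T_abd}, which expresses the approximate solution operator as
\begin{equation*}
  S_T^{\rm abd} = \phi_{1/T} \ast \widehat{\Phi}_{\rm abd}(\omega) ,
\end{equation*}
so that the whole question reduces to a statement about the behavior of the matrix-valued distribution $\widehat{\Phi}_{\rm abd}$ near $\tau = \omega$.

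Next I would compute the symbol $\widehat{\Gamma}_{\rm abd}(\tau)$ of the difference operator $C_{\rm abd}$ in \eqref{eq:operator_Cabd}. The second half of Proposition~\ref{prop:reproduce_symbol_at_omega}, with $K,L$ as in \eqref{eq:K_L_frequency_adapted_backward_differences}, is precisely the statement that substituting $u(t) = e^{i\omega t} U$ into $C_{\rm abd}$ reproduces $HU$, so
\begin{equation*}
  \widehat{\Gamma}_{\rm abd}(\omega) = -\omega^2 I + i\omega B + A = H .
\end{equation*}
Since $\Gamma_{\rm abd}$ is supported on three points $\{-\Delta t, 0, \Delta t\}$, its Fourier transform $\widehat{\Gamma}_{\rm abd}$ is a trigonometric polynomial and hence $C^\infty$ on $\TT_{2\pi/\Delta t}$.

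Now I would use the stability hypothesis. By the theorem for $I_{\rm abd}$ (the analogue of Theorem~\ref{th:stability_cd}), the causal Green's function $\Phi_{\rm abd}$ grows at most polynomially, so its Fourier transform is a well-defined distribution on $\TT_{2\pi/\Delta t}$, and the identity
\begin{equation*}
  \widehat{\Gamma}_{\rm abd}(\tau)\, \widehat{\Phi}_{\rm abd}(\tau) = I
\end{equation*}
holds in the sense of distributions. Because $H$ is non-singular and $\widehat{\Gamma}_{\rm abd}$ is continuous, $\widehat{\Gamma}_{\rm abd}(\tau)$ remains invertible on some open neighborhood of $\omega$; on that neighborhood one can multiply by $\widehat{\Gamma}_{\rm abd}(\tau)^{-1}$ and conclude that $\widehat{\Phi}_{\rm abd}$ coincides there with the continuous function $\widehat{\Gamma}_{\rm abd}(\tau)^{-1}$. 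In particular $\widehat{\Phi}_{\rm abd}(\omega) = H^{-1}$.

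Finally, since $\phi_\epsilon$ is an approximation of the identity and $\widehat{\Phi}_{\rm abd}$ is continuous at $\tau=\omega$, the convolution $\phi_{1/T} \ast \widehat{\Phi}_{\rm abd}$ evaluated at $\omega$ converges to $\widehat{\Phi}_{\rm abd}(\omega) = H^{-1}$ as $T \to \infty$, giving the claim. The only delicate step is the local inversion near $\tau=\omega$ that promotes the distributional identity to an honest equality of continuous functions on a neighborhood; everything else is a direct transcription of the $S_T^{\rm acd}$ argument with $\widehat{\Gamma}_{\rm acd}$ replaced by $\widehat{\Gamma}_{\rm abd}$ and with the backward-difference stability theorem substituted for Theorem~\ref{th:stability_cd}.
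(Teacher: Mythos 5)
Your proposal is correct and is exactly the argument the paper intends: the paper states that Theorems~\ref{th:convolution_formula_S_T_abd} and \ref{th:convergence_abd} are proved in the same way as the acd versions, and your write-up is precisely that transcription, with the symbol identity $\widehat{\Gamma}_{\rm abd}(\omega)=H$ coming from the second claim of Proposition~\ref{prop:reproduce_symbol_at_omega} and the polynomial growth of $\Phi_{\rm abd}$ from the backward-difference stability theorem. No gaps.
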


\section{Concluding remarks}

In this paper we constructed a time-domain preconditioner for 
indefinite linear systems that come from discretizing time-harmonic
wave equations, and studied its properties and behavior
analytically and with numerical examples.  It should be emphasized
that the method does not compute in the physical time domain. We call
it a time-domain method, because of the similarity with classical
time-domain methods for time-harmonic waves.
For the practical application it would be useful to have further
examples, with different matrices $H$ and a comparison with
alternatives.
We will make a few brief remarks in this direction.

First it is clear that the method requires relatively little
memory, compared to
alternatives that use LU (or LDL${}^T$) decompositions, such as 
domain-decomposition and direct methods, see
\cite{stolk2017improved,poulson2013parallel} and  references in
\cite{gander2019class}.
In the 3-D implementation here, with GMRES with restart $m = 10$,
most memory was used for the approximately
$m + 2$ complex double
precision vectors needed for GMRES. 
This was in part because the timestepping was done in single precision, and
using real fields. In principle, memory use could be somewhat reduced by setting
$m = 5$ or using a different iterative method like BiCGSTAB. 

For finite element discretizations the cost in general will be
different. These methods often have stricter CFL bounds, there may
be more nonzero matrix elements, and the use of unstructured meshes
may also influence efficiency. On the other hand, the size
of the grid cells can be adapted to the local velocity so that the
number of grid cells can be smaller. 

It is difficult to compare performance of different methods, as
methods are run on different computer systems, with different examples
and implementations are optimized to different degrees. We refer to
\cite{appelo2019waveholtz,CalandraEtAl2013,liu2018solving,
  poulson2013parallel,stolk2017improved,taus2019sweeps,WangDeHoopXia2011}
for some alternative methods. We believe that computation times of the
method as outlined are modest, and there is potential for further
improvements, by using GPUs or by reorganizing the code to better
make use of cached data.  It remains challenging to get the most out
of modern computer hardware in time-domain finite-difference and
finite-element simulations. We hope that recent developments in this
area, cf.\ \cite{louboutin2019devito}, will lead to further improvements.

\bibliographystyle{abbrv}
\bibliography{helmfdtd}

\appendix

\section{Additional material for subsection~\ref{subsec:optimized-fd}}
\label{app:opt_fd}

In case of variable coefficients, the 27 point optimized finite-differences discretization used in
subsection~\ref{subsec:optimized-fd} is done in a quasi-finite-element
way that we now explain. This is consistent with
\cite{stolk2016dispersion}.

We first define some notation. In this
appendix, three dimensional indices are denoted by greek letters,
e.g.\ $\alpha = (\alpha_1,\alpha_2,\alpha_3)$. A grid cell will have
the same index as the point in the lower (in all dimensions) corner.
The set of corners of a grid cell will be denoted by $C(\alpha)$. We
define
\begin{equation}
  \Delta(\alpha,\beta)
  = | \alpha_1 - \beta_1 |
  + | \alpha_2 - \beta_2 |
  + | \alpha_3 - \beta_3 | .
\end{equation}
If $\alpha, \beta$ are two corner points of a grid cell, then this
number is 0 if they are the same and 1, 2, or 3 if they are opposite
points on an edge, face, or the cell itself respectively.
We define
\begin{equation}
  \tilde{f}_s\left( \frac{k h}{2\pi} \right) 
  = \frac{1}{2^{3-s}} f_s\left( \frac{k h}{2\pi} \right) 
\end{equation}
In case of variable coefficients, the coefficient $k$ will be constant
on grid cells, its value is denoted by $k^{(\alpha)}$ 

The matrix $\Re H$ will be associated with a bilinear form
\begin{equation}
  \mathcal{H}(v,u) =
  \sum_{\text{cells $\alpha$}} \mathcal{H}_{\rm cell}(\alpha; v,u) .
\end{equation}
The contribution for a single cell is given by
\begin{equation}
  \mathcal{H}_{\rm cell}(\alpha; v,u)
  =
  \sum_{\beta,\gamma \in C(\alpha)}
  \tilde{f}_{\Delta(\beta,\gamma)} \left( \frac{h k^{(\alpha)}}{2 \pi}
  \right)
  v^{(\beta)} u^{(\gamma)}
\end{equation}
For each cell we have
\begin{equation}
  \mathcal{H}_{\rm cell}(\alpha; u,u)
  \ge  - (k^{(\alpha)})^2 \frac{1}{8} \sum_{\beta \in C(\alpha)}
  \left| u^{(\beta)} \right|^2 .
\end{equation}
An expression for $\lambda_{\rm min}(\Re H)$ follows straightforwardly
from this.  For an upperbound for
$\lambda_{\rm max}(\Re H)$ one can use (\ref{eq:upperbound_HoptimFd})
with $k$ replaced by $k_{\rm min}$.

\section{Fourier transform of the unit step function on $\ZZ$}
\label{app:Fourier_transform_unit_step}

In this appendix we consider the Fourier transform of a discrete unit step
function. Although this is a standard result, we could not locate a
proof and included one here.
\begin{proposition} \label{prop:FT_unit_step}
Denote by $U(n)$, $n \in \ZZ$ the unit step function
\begin{equation} \label{eq:define_unit_step}
  U(n) =
  \left\{\begin{array}{ll}
           0 & \text{if $n\le 0$}\\
           1 & \text{if $n>0$}
           \end{array}\right.
\end{equation}
The Fourier transform of $U$ equals
\begin{equation} \label{eq:FT_unit_step}
  \widehat{U}(\nu) = \pi \delta(\nu) - \frac{i}{2} \cot(\frac{\nu}{2}) - \frac{1}{2} .
\end{equation}
Here the distribution associated with $\cot$ is defined by the principal value integral.
\end{proposition}

\begin{proof}
Consider the first the Fourier transform of $g(n) = \sgn(n)$. We claim it is
given by
\begin{equation} \label{eq:FT_of_sgn_1}
  \widehat{g}(\nu) = \frac{1}{1-e^{-i \nu}} - \frac{1}{1-e^{i \nu}}  ,
\end{equation}
which is interpreted as a distribution using the principal value integral.
Indeed, consider the inverse Fourier transform of our candidate for $\widehat{g}$
\begin{equation}
  \frac{1}{2 \pi} \int_{-\pi}^\pi \left(
    \frac{e^{in \nu}}{1-e^{-i \nu}} - \frac{e^{i n \nu}}{1-e^{i \nu}}
      \right) \, d \nu .
\end{equation}
For each of the two contributions the real part is even and the
imaginary part is odd in $\nu$. Therefore, for each of the two
contributions only the real part contributes to the integral and the
integral equals
\begin{equation}
\int_{-\pi}^\pi \frac{e^{i n \nu} - e^{-i n \nu}}{1-e^{-i \nu}} \, d
  \nu
  = \left\{
    \begin{array}{ll}
      0 & \text{if $n=0$} \\
      \frac{1}{2\pi} \int_{-\pi}^\pi \left( e^{in\nu} + e^{i(n-1)\nu}
      + \ldots + e^{-i(n-1) \nu} \right) \, d \nu = 1 & \text{if $n>0$}
      \\
      - \frac{1}{2\pi} \int_{-\pi}^\pi \left( e^{i(-n)\nu} + e^{i(-n-1)\nu}
      + \ldots + e^{-i(-n-1) \nu} \right) \, d \nu = -1 & \text{if $n<0$}
    \end{array}
  \right.
\end{equation}
Simple manipulations show that $\widehat{g}$ can also be written as
$\widehat{g} = - i \cot( \frac{\nu}{2} )$.
The unit step function can be written as
\begin{equation}
  U(n) = \frac{1}{2} + \frac{1}{2}\sgn(n) - \frac{1}{2} \delta_{0,n}
\end{equation}
The Fourier transform is hence as given in (\ref{eq:FT_unit_step}).
\end{proof}

\end{document}